\newtheorem{theorem}{Theorem}[section]
\newtheorem{lemma}[theorem]{Lemma}
\newtheorem{corollary}[theorem]{Corollary}
\newtheorem{conjecture}[theorem]{Conjecture}
\theoremstyle{definition}
\newtheorem{Def}[theorem]{Definition}
\theoremstyle{remark}
\newtheorem{remark}[theorem]{Remark}
\numberwithin{equation}{section}
\DeclareMathOperator\ai{ai}
\DeclareMathOperator\asc{asc}
\DeclareMathOperator\da{da}
\DeclareMathOperator\dd{dd}
\DeclareMathOperator\des{des}
\DeclareMathOperator\exc{exc}
\DeclareMathOperator\fix{fix}
\DeclareMathOperator\inv{inv}
\DeclareMathOperator\maj{maj}
\DeclareMathOperator\peak{peak}
\DeclareMathOperator\valley{valley}
\DeclareMathOperator\cpeak{cpeak}
\DeclareMathOperator\cvalley{cval}
\DeclareMathOperator\cdfall{cdfall}
\DeclareMathOperator\cdrise{cdrise}
\DeclareMathOperator\fdes{fdes}
\DeclareMathOperator\fmax{fmax}
\DeclareMathOperator\drop{drop}
\DeclareMathOperator\cros{cros}
\DeclareMathOperator\nest{nest}
\newcommand{\PRW}{\mathrm{PRW}}
\renewcommand{\S}{\mathfrak{S}}
\newcommand{\Z}{\mathbb{Z}}
\begin{document}

\title[Around the $q$-binomial-Eulerian polynomials]
{Around the $q$-binomial-Eulerian polynomials}

\author[Z. Lin]{Zhicong Lin}
\address[Zhicong Lin]{School of Science, Jimei University, Xiamen 361021, P.R. China}
\email{zhicong.lin@univie.ac.at}

\author[D.G.L. Wang]{David G.L. Wang$^\dag$$^\ddag$}
\address{
$^\dag$School of Mathematics and Statistics, Beijing Institute of 
Technology, Beijing 102488, P.R.~China\\
$^\ddag$Beijing Key Laboratory on MCAACI, Beijing Institute of 
Technology, Beijing 102488, P.R.~China}
\email{david.combin@gmail.com}

\author[J. Zeng]{Jiang Zeng}
\address[Jiang Zeng]{Univ Lyon, Universit\'e Claude Bernard Lyon 1, CNRS UMR 5208, Institut Camille Jordan, F-69622 Villeurbanne, France}
\email{zeng@math.univ-lyon1.fr}

\begin{abstract}
We find a combinatorial interpretation of Shareshian and Wachs' $q$-binomial-Eulerian polynomials, which leads to an alternative proof of their $q$-$\gamma$-positivity using group actions. Motivated by the sign-balance identity of D\'esarm\'enien--Foata--Loday for the $(\mathrm{des}, \mathrm{inv})$-Eulerian polynomials, we further investigate the sign-balance of the $q$-binomial-Eulerian polynomials. We show the unimodality of the resulting signed binomial-Eulerian polynomials by exploiting their continued fraction expansion and making use of a new quadratic recursion for the $q$-binomial-Eulerian polynomials. We finally use the method of continued fractions to derive a new $(p,q)$-extension of the $\gamma$-positivity of binomial-Eulerian polynomials which involves crossings and nestings of permutations.
\end{abstract}

\keywords{Sign-balance; $q$-binomial-Eulerian polynomials; 
unimodality; gamma-positivity}

\maketitle

\section{Introduction}
Let $\S_n$ be the set of all permutations of $[n]:=\{1,2,\ldots,n\}$. For any permutation $\pi=\pi_1\pi_2\cdots\pi_n\in\S_n$, 
the number of \emph{descents}, the number of \emph{excedances}, the \emph{inversion number} and the \emph{major index} of $\pi$ are defined,  respectively, by
\begin{align*}
\des(\pi)&:=|\{i\in[n-1]\colon \pi_i>\pi_{i+1}\}|,\\
\exc(\pi)&:=|\{i\in[n-1]\colon\pi_i>i\}|,\\
\inv(\pi)&:=|\{(i,j)\in[n]\times[n]\colon i<j\text{ and }\pi_i>\pi_j\}|,\\
\maj(\pi)&:=\sum_{\pi_i>\pi_{i+1}} i.
\end{align*}
The first two statistics are  {\em Eulerian statistics} whose
 enumerative polynomials give the $n$th {\em Eulerian polynomial} (cf.~\cite[Sec.~1.3]{st0}) 
$$
A_n(t)=\sum_{\pi\in\S_n}t^{\des(\pi)}=\sum_{\pi\in\S_n}t^{\exc(\pi)},
$$
while the other two statistics are {\em Mahonian statistics} 
with common generating function
\[
[n]_q!:=\prod_{i=1}^n(1+q+\dots+q^{i-1}).
\] 
The joint distributions of Eulerian and Mahonian statistics on permutations have been widely studied; 
see~\cite{Ath,bu,fh,lin,lsw,sw0,sw1,st,sz}.

The {\em$(\maj,\exc)$-Eulerian polynomials} $A_n(t,q)$, which arise in Shareshian and Wachs' study of poset topology~\cite{sw0}, are defined as 
$$
A_n(t,q)=\sum_{\pi\in\S_n} t^{\exc(\pi)}q^{\maj(\pi)-\exc(\pi)}.
$$
Their exponential generating function has a nice $q$-analog 
of Euler's formula (see~\cite{sw1,fh}),
\begin{equation}\label{exc:maj}
\sum_{n\geq0}A_n(t,q)\frac{z^n}{(q;q)_n}=\frac{(1-t)e(z;q)}{e(tz;q)-te(z;q)},
\end{equation}
 where 
\[
(q;q)_n :=\prod_{i=1}^{n}(1-q^i)
\quad\text{and}\quad
e(z;q):=\sum_{n\geq 0}\frac{z^n}{(q;q)_n}.
\]
An \emph{admissible inversion} of a permutation 
 $\pi\in\S_n$ is an inversion pair $(\pi_i,\pi_j)$
satisfying either of the following conditions:
 \begin{itemize}
 \item $1<i$ and $\pi_{i-1}<\pi_i$ or 
 \item there is some $k$ such that $i<k<j$ and $\pi_i<\pi_k$.
 \end{itemize}
Let $\ai(\pi)$ be the number of admissible inversions of $\pi$. 
For example, the admissible inversions of $3142$ are $(3,2)$ and $(4,2)$. 
So $\ai(\pi)=2$. 
The statistic of admissible inversions was first introduced by Shareshian and Wachs~\cite{sw0}, 
who gave the interpretation
\begin{equation}\label{q-eul}
A_n(t,q)=\sum_{\pi\in\S_n}t^{\des(\pi)}q^{\ai(\pi)}.
\end{equation}
The detailed proof of this interpretation was given by Linusson, Shareshian and Wachs~\cite{lsw} using Rees products of posets; see~\cite{bu,lin} for alternative approaches and a generalization.
 
It is known (cf.~\cite{Petersen2015}) that 
the Eulerian polynomials are 
 the $h$-polynomials of dual permutohedra.
Postnikov, Reiner, and Williams~\cite[Section~10.4]{prw} proved that 
the $h$-polynomials of dual stellohedra
equal the binomial transformations 
$$
\tilde{A}_n(t)=1+t\sum_{m=1}^n{n\choose m}A_m(t)
$$
of the Eulerian polynomials,
and provided the combinatorial interpretation
\begin{equation}\label{prw}
\tilde{A}_n(t)=\sum_{\pi\in\PRW_{n+1}}t^{\des(\pi)},
\end{equation}
where $\PRW_n$ is the set of permutations $\pi\in\S_n$ such that the first ascent of $\pi$ appears at the letter $1$ if $\pi$ has an ascent.
For example, 
\[
\PRW_1=\{1\},\quad
\PRW_2=\{12,21\},\quad\text{and}\quad
\PRW_3=\{123,132,213,312,321\}.
\]
Shareshian and Wachs~\cite{sw2} called $\tilde{A}_n(t)$ {\em binomial-Eulerian polynomials} 
and introduced the $q$-{\em binomial-Eulerian polynomials} 
$$
\tilde{A}_n(t,q)=1+t\sum_{m=1}^n{n\brack m}_qA_m(t,q),
$$
where 
\[
{n\brack m}_q=\frac{(q;q)_n}{(q;q)_m (q;q)_{n-m}}
\]
are the $q$-binomial coefficients.

Even though an algebro-geometric interpretation of $\tilde{A}_n(t,q)$ has already been found  in~\cite{sw2}, 
no combinatorial interpretation of $\tilde{A}_n(t,q)$ is known
similar to classical Eulerian polynomials. Our first aim is to 
give such an interpretation, which 
is a $q$-analog of \eqref{prw} and is similar to the interpretation~\eqref{q-eul} for $A_n(t,q)$.
\begin{theorem}\label{int:bin}
For $n\geq1$, the $q$-binomial-Eulerian polynomial $\tilde{A}_n(t,q)$ has the interpretation
\begin{equation*}
\tilde{A}_n(t,q)=\sum_{\pi\in\PRW_{n+1}}t^{\des(\pi)}q^{\ai(\pi)}.
\end{equation*}
\end{theorem}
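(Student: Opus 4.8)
The plan is to turn the defining recursion for $\tilde A_n(t,q)$ into a weight-preserving bijection. Using the interpretation \eqref{q-eul} to expand each $A_m(t,q)$, the right-hand side becomes
\begin{equation*}
1+t\sum_{m=1}^n{n\brack m}_q A_m(t,q)
=1+t\sum_{m=1}^n\ \sum_{S\in\binom{[n]}{m}}q^{\inv(S)}\ \sum_{\sigma\in\S_m}t^{\des(\sigma)}q^{\ai(\sigma)},
\end{equation*}
where, for $S=\{s_1<\cdots<s_m\}$, I write $\inv(S):=\sum_{i=1}^m(s_i-i)$, the standard statistic satisfying $\sum_{S\in\binom{[n]}{m}}q^{\inv(S)}={n\brack m}_q$. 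The goal is then a bijection $\Phi$ from $\PRW_{n+1}$ onto the identity permutation together with all triples $(m,S,\sigma)$ with $1\le m\le n$, $S\in\binom{[n]}{m}$ and $\sigma\in\S_m$, under which
\begin{equation*}
\des(\pi)=1+\des(\sigma)\qquad\text{and}\qquad\ai(\pi)=\inv(S)+\ai(\sigma).
\end{equation*}
The identity of $[n+1]$ accounts for the leading $1$, as it is the unique element of $\PRW_{n+1}$ with no descent.

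To construct $\Phi$ I would exploit the structural description of $\PRW_{n+1}$ that is immediate from its definition: $\pi$ lies in $\PRW_{n+1}$ exactly when the letters preceding the value $1$ are decreasing, so every non-identity $\pi$ factors uniquely as $\pi=c_1c_2\cdots c_k\,1\,w_1w_2\cdots w_l$ with $c_1>c_2>\cdots>c_k>1$ and $w=w_1\cdots w_l$ an arbitrary arrangement of the remaining letters, $k+l=n$. Such a $\pi$ has $k$ descents inside the decreasing block $c_1\cdots c_k1$ together with $\des(w)$ further descents, so $\des(\pi)=k+\des(w)$. Since the recursion allows only a single extra factor of $t$, the heart of the matter is to trade the $k-1$ surplus descents of the decreasing block for inversions recorded by the $q$-binomial coefficient: one relocates all but one of the letters $c_1,\dots,c_k$ by a $q$-shuffle, producing the subset $S$ together with its weight $q^{\inv(S)}$, keeps one descent to supply the factor $t$, and lets $\sigma$ be the standardization of the essential remaining letters, so that $\des(\sigma)=\des(\pi)-1$. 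A useful guiding fact, which also pins down where the extra $t$ comes from, is that deleting the largest letter $n+1$ when it sits at the front of $\pi$ lowers $\des$ by exactly one and leaves $\ai$ unchanged: its inversions $(n+1,\pi_j)$ are never admissible, because $n+1$ stands at position $1$ and has no larger letter to its right, and the removal affects no other pair. This suggests organizing the whole argument as an induction on $n$, peeling off $n+1$ and distinguishing whether it lies at the front or within the block following $1$.

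The main obstacle is the control of $\ai$. Admissibility of an inversion is a \emph{non-local} condition—it demands either an ascent immediately to the left of its top or a strictly larger letter somewhere strictly between its two entries—so rearranging the decreasing block can both create and destroy admissible inversions elsewhere; moving a large letter inward, for example, may render a previously inadmissible inversion admissible by serving as the intervening larger letter. The crux of the proof is therefore to show that, under $\Phi$, the admissible inversions of $\pi$ split cleanly as a disjoint union of the shuffle inversions counted by $q^{\inv(S)}$ and the admissible inversions internal to $\sigma$, with no cross terms. I would establish this by tracking how each of the two defining conditions for admissibility is affected as the relocated letters are inserted one at a time, and I would corroborate the resulting correspondence on the cases $n\le 3$—where $\tilde A_n(t,q)$ can be read off directly from the recursion—before carrying out the general inductive verification.
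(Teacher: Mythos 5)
Your proposal is a plan rather than a proof: the map $\Phi$ is never defined, and the one claim on which everything hinges --- that $\ai(\pi)$ splits as $\inv(S)+\ai(\sigma)$ with no cross terms --- is precisely the point you defer (``I would establish this by tracking \dots''). The difficulty is not just bookkeeping. Because you aim to match the defining sum $1+t\sum_m{n\brack m}_qA_m(t,q)$ term by term with a \emph{single} extra factor of $t$, the permutation $\sigma$ must satisfy $\des(\sigma)=\des(\pi)-1=(k-1)+\des(w)$ for $\pi=c_1\cdots c_k\,1\,w$; so $\sigma$ has to absorb $k-1$ descents that live in the initial decreasing run of $\pi$, while $S$ simultaneously absorbs the right number of $q$'s. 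You give no mechanism for this, and your own (correct) observation that admissibility is non-local under rearrangement of the block $c_1\cdots c_k$ is exactly why an insertion-by-insertion argument is delicate: moving a large $c_i$ inward creates new witnesses for condition (ii) of admissibility. Checking $n\le 3$ will not certify a construction that has not been written down.

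The paper takes a different and, for the $\ai$ statistic, much better-adapted route: it first proves the quadratic recursion $\tilde{A}_{n+1}(t,q)=(1+t)\tilde{A}_n(t,q)+t\sum_{k=1}^{n}{n\brack k}_qq^kA_k(t,q)\tilde{A}_{n-k}(t,q)$ analytically (via the Eulerian differential operator applied to the generating function), and then shows that $\sum_{\pi\in\PRW_{n+1}}t^{\des(\pi)}q^{\ai(\pi)}$ satisfies the same recursion by decomposing $\pi$ at the position of the \emph{largest} letter $n+1$, writing $\pi=\pi_L\,(n+1)\,\pi_R$. This is the decomposition that makes $\ai$ additive: every inversion $(n+1,\pi_j)$ is admissible by condition (i) since $n+1$ is preceded by a smaller letter, and every inversion straddling $n+1$ is admissible by condition (ii) with $n+1$ itself as the intervening larger letter, so $\ai(\pi)=\ai(\pi_L)+\ai(\pi_R)+\inv([n]\setminus W,W)+k$ exactly. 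Your ``guiding fact'' about deleting $n+1$ from the front is the boundary case of this decomposition, so you are circling the right idea; but you should (a) split at $n+1$ rather than at the letter $1$, and (b) recognize that closing the resulting induction requires the quadratic recursion for $\tilde{A}_n(t,q)$, which does not follow formally from the definition and must be established separately.
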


Recall that a polynomial $\sum_{i=0}^nh_it^i$ in $t$ with real coefficients is said to be {\em palindromic}
if $h_i=h_{n-i}$ for all $0\leq i\leq \lfloor n/2\rfloor$. It is {\em unimodal} if 
$$h_0\leq h_1\leq\cdots\leq  h_c\geq h_{c+1}\geq\cdots\geq h_n\qquad\text{for some $c$.}$$
A stronger property implying both the palindromicity and the unimodality is the $\gamma$-positivity. 
A polynomial of degree $n$ in $t$ with real coefficients is said to be {\em$\gamma$-positive} if it can be written in the basis
$$
\{t^k(t+1)^{n-2k}\}_{0\leq k\leq n/2}
$$
with non-negative coefficients. Many interesting polynomials arising in enumerative and geometric combinatorics are palindromic and unimodal, some of which are even $\gamma$-positive; see~\cite{Ath,br2,Petersen2015}. 

For a permutation $\sigma=\sigma_1\cdots\sigma_n\in\S_n$, we call $\sigma_i$ ($1\leq i\leq n$) a {\em double descent} (resp.~{\em double ascent, peak, valley}) of $\sigma$ if $\sigma_{i-1}>\sigma_i>\sigma_{i+1}$ (resp.~$\sigma_{i-1}<\sigma_i<\sigma_{i+1}$, $\sigma_{i-1}<\sigma_i>\sigma_{i+1}$, $\sigma_{i-1}>\sigma_i<\sigma_{i+1}$), where we use the convention $\sigma_0=\sigma_{n+1}=+\infty$. In particular, $\sigma_1$ is a double descent if $\sigma_1>\sigma_2$,  and in this case we will call $\sigma_1$ the {\em initial double descent}. Denote by $\dd(\sigma)$ (resp.~$\da(\sigma)$, $\peak(\sigma)$, $\valley(\sigma)$) the number of non-initial double descents (resp.~double ascents, peaks, valleys) of $\sigma$. 
Foata and  Sch\"uzenberger~\cite[Theorem~5.6]{fsc0} proved the following elegant $\gamma$-positivity expansion of the Eulerian polynomials
\begin{equation}\label{gam:eul}
A_n(t)=\sum_{k=0}^{\lfloor \frac{n-1}{2}\rfloor}\gamma_{n,k}t^k(1+t)^{n-1-2k},
\end{equation}
where $\gamma_{n,k}$ is the cardinality of the set
\begin{equation*}\label{gam:int}
\Gamma_{n,k}:=\{\sigma\in\S_n\colon\dd(\sigma)=0,\ \sigma_1<\sigma_2\text{ and } \des(\sigma)=k\}.
\end{equation*}
The $\gamma$-positivity formula of Postnikov, Reiner, and Williams~\cite[Theorem~11.6]{prw} in the case of stellohedron asserts that
\begin{equation}\label{gam:bino-eul}
\tilde{A}_n(t)=\sum_{k=0}^{\lfloor \frac{n-1}{2}\rfloor}\tilde{\gamma}_{n,k}t^k(1+t)^{n-1-2k},
\end{equation}
where 
$\tilde{\gamma}_{n,k}$ counts permutations $\sigma\in\PRW_{n+1}$ such that $\sigma$ has no double ascents and $\asc(\sigma)=k$, where $\asc(\sigma):=n-1-\des(\sigma)$.

The following $q$-analog of~\eqref{gam:eul} was proved  by various methods in~\cite{sw1,lsw,lz,sw2}: 
$$
A_n(t,q)=\sum_{k=0}^{\lfloor \frac{n-1}{2}\rfloor}\gamma_{n,k}(q)t^k(1+t)^{n-1-2k},
$$
where $\gamma_{n,k}(q)=\sum_{\sigma\in\Gamma_{n,k}}q^{\inv(\sigma)}$,
 and a similar $q$-$\gamma$-positivity
  expansion for $\tilde{A}_n(t,q)$  was recently established by Shareshian and Wachs~\cite[Theorem~4.5]{sw2}.
  
  \begin{theorem}[Shareshian and Wachs]
\label{q-bino:gam}
Let 
$$\tilde{\Gamma}_{n,k}:=\{\sigma\in\S_n\colon \dd(\sigma)=0, \des(\sigma)=k\}.$$
The $q$-binomial-Eulerian polynomials have the $q$-$\gamma$-positivity expansion 
\begin{equation}\label{bino:gam}
\tilde{A}_{n}(t,q)=\sum_{k=0}^{\lfloor\frac{n}{2}\rfloor}\tilde{\gamma}_{n,k}(q)t^k(1+t)^{n-2k},
\end{equation}
where 
\begin{equation*}\label{bino:int}
\tilde{\gamma}_{n,k}(q)=\sum_{\pi\in\tilde{\Gamma}_{n,k}}q^{\inv(\pi)}.
\end{equation*}
\end{theorem}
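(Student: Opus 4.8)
The plan is to derive Theorem~\ref{q-bino:gam} from the combinatorial model supplied by Theorem~\ref{int:bin} by means of an $\ai$-preserving valley-hopping group action on $\PRW_{n+1}$, along which $\des$ varies over each orbit in a controlled way. Starting from
$$
\tilde{A}_n(t,q)=\sum_{\pi\in\PRW_{n+1}}t^{\des(\pi)}q^{\ai(\pi)},
$$
the idea is to partition $\PRW_{n+1}$ into orbits on each of which $\ai$ is constant while $\des$ ranges over a full interval, so that every orbit sum factors as $q^{c}\,t^{k}(1+t)^{n-2k}$; summing over the orbits then produces the claimed $q$-$\gamma$-expansion.

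First I would introduce, for each letter $x\in[n+1]$, a modified Foata--Sch\"utzenberger involution $\varphi_x$ acting on $\S_{n+1}$ with the boundary convention $\pi_0=\pi_{n+2}=+\infty$: it fixes $\pi$ when $x$ is a peak or a valley, and otherwise it slides $x$ to the unique position that turns a double ascent into a double descent and conversely. These involutions commute and generate an action of a suitable power of $\Z/2\Z$. Two points must be checked. The first is that $\PRW_{n+1}$ is a union of orbits, i.e.\ that hopping a movable letter preserves the defining property that the prefix preceding the letter $1$ is decreasing; since $1$ is always a valley it is never moved, and one verifies that the remaining hops respect the position of $1$ relative to the larger letters. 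The decisive point is that $\varphi_x$ leaves $\ai(\pi)$ unchanged: because admissible inversions are governed by the local ascent/descent pattern at each letter, one must track carefully how both clauses in the definition of $\ai$ respond to a single hop. This $\ai$-invariance is the step I expect to be the main obstacle, since $\ai$ is considerably more rigid than the ordinary inversion number, and it may well force a modification of the naive hopping rule.

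Granting these two properties, each orbit $\mathcal{O}$ carries a constant value of $\ai$, and its $r$ movable letters may be toggled independently between ``double ascent'' (contributing $0$ to $\des$) and ``double descent'' (contributing $1$). Letting $\pi^{\circ}$ be the representative in which every movable letter is a double descent, one obtains
$$
\sum_{\pi\in\mathcal{O}}t^{\des(\pi)}q^{\ai(\pi)}=q^{\ai(\pi^{\circ})}\,t^{\des(\pi^{\circ})-r}(1+t)^{r}.
$$
A count of peaks and valleys—using that, under the $+\infty$ boundary convention, valleys outnumber peaks by exactly one—gives $r=n-2(\des(\pi^{\circ})-r)$, so the exponent of $1+t$ equals $n-2k$ with $k=\des(\pi^{\circ})-r=\asc(\pi^{\circ})$. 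Thus the $\gamma$-expansion is indexed by the orbit representatives of $\PRW_{n+1}$ having no double ascents, which are exactly the permutations described after~\eqref{gam:bino-eul}.

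It then remains to identify these representatives with $\tilde{\Gamma}_{n,k}$ and to match the weights. Here I would seek an explicit bijection from the no-double-ascent elements $\pi\in\PRW_{n+1}$ with $\asc(\pi)=k$ onto $\tilde{\Gamma}_{n,k}=\{\tau\in\S_n\colon\dd(\tau)=0,\ \des(\tau)=k\}$; the natural candidate is to delete the letter $1$ and standardize (after a reversal or complementation to exchange the roles of ascents and descents and to reconcile the boundary conventions), sending $\asc(\pi)$ to $\des(\tau)$, the vanishing of double ascents to $\dd(\tau)=0$, and crucially $\ai(\pi)$ to $\inv(\tau)$—the latter being tractable because the absence of double ascents simplifies the admissibility constraints. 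Feeding this bijection into the orbit sums converts $\sum q^{\ai(\pi^{\circ})}t^{\asc(\pi^{\circ})}(1+t)^{n-2\asc(\pi^{\circ})}$ into $\sum_k\bigl(\sum_{\tau\in\tilde{\Gamma}_{n,k}}q^{\inv(\tau)}\bigr)t^k(1+t)^{n-2k}$, which is precisely~\eqref{bino:gam}. Apart from the $\ai$-invariance of the action, the remaining bookkeeping—the peak/valley balance yielding the exponent $n-2k$ and the statistic-matching of the final bijection—should be routine once the action on $\PRW_{n+1}$ is correctly set up.
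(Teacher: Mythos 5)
Your outline is essentially the paper's proof: start from Theorem~\ref{int:bin} and run the Modified Foata--Strehl valley-hopping action on $\PRW_{n+1}$, the two facts you flag as the real obstacles --- that $\ai$ is constant on each orbit, and that $\ai$ agrees with $\inv$ on the chosen orbit representatives --- being precisely Lemmas~7 and~8 of Lin--Zeng \cite{lz}, which the paper simply cites rather than reproves. The one place your version creates avoidable friction is the choice of representative: you take the element $\pi^{\circ}$ with no double ascents and then propose a reversal or complementation to land in $\tilde{\Gamma}_{n,k}$, but $\ai$ does not transform transparently under complementation, so the claimed transfer $\ai(\pi)\mapsto\inv(\tau)$ is not routine as stated. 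The paper instead takes the representative $\bar{\sigma}$ with no double descents (least descents in the orbit); since such a $\bar{\sigma}\in\PRW_{n+1}$ begins with an ascent, its first letter is forced to be $1$, and the map $\bar{\sigma}\mapsto(\bar{\sigma}_2-1)\cdots(\bar{\sigma}_{n+1}-1)$ is a bijection onto $\tilde{\Gamma}_{n,k}$ preserving both $\des$ and $\inv$ with no reversal needed. Nothing is lost by indexing from the bottom of each orbit rather than the top, since the orbit sum $q^{c}t^{k}(1+t)^{n-2k}$ is the same either way; so I would switch to that representative and invoke (or reprove) \cite[Lemmas~7--8]{lz} for the two invariance statements, after which your argument coincides with the paper's.
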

 Note that the combinatorial meanings of $\tilde{\gamma}_{n,k}(1)$ in~\eqref{bino:gam} and $\tilde{\gamma}_{n,k}$ in~\eqref{gam:bino-eul} are apparently different.   
As observed  in~\cite{sw2}, the existence of  expansion~\eqref{bino:gam} with $\tilde{\gamma}_{n,k}(q)\in\Z[q]$ for $\tilde{A}_n(t,q)$ is equivalent to a symmetric $q$-Eulerian identity  due  independently to  Chung--Graham~\cite{cg} and Han--Lin--Zeng~\cite{hlz}. Theorem~\ref{q-bino:gam} was obtained from the principle specialization of an analogous symmetric function identity in~\cite{sw2}. 
Theorem~\ref{int:bin} together with the so-called {\em Modified Foata--Strehl group action} on permutations enables us to give a combinatorial proof to Theorem~\ref{q-bino:gam}. Our alternative approach has the advantage that 
makes the interpretation of $\tilde{\gamma}_{n,k}$ in~\eqref{gam:bino-eul} transparent; see Remark~\ref{gam:prw}.

 In 1992, D\'esarm\'enien and Foata~\cite{df} showed the following sign-balance identity, which was conjectured by Loday~\cite{lo},
\begin{equation}\label{foata}
\sum_{\pi\in\S_n}t^{\des(\pi)}(-1)^{\inv(\pi)}=
\begin{cases}
\,\,(1-t)^mA_{m}(t), \qquad\qquad&\text{if $n=2m$};\\
\,\,(1-t)^mA_{m+1}(t),&\text{if $n=2m+1$}.
\end{cases}
\end{equation}
This paper stems from the observation that identity~\eqref{foata} follows from  a simple quadratic  recursion~\eqref{rec:desinv} for the $(\inv,\des)$-$q$-Eulerian polynomials. This idea enables us to prove similar sign-balance identities for $A_n(t,q)$ and $\tilde{A}_n(t,q)$.
It appears that the signed binomial-Eulerian polynomials $\tilde{A}_n(t,-1)$ have interesting properties 
which are observable from their first terms:    
\begin{align*}
\tilde{A}_1(t,-1)&=1+t,\\
\tilde{A}_2(t,-1)&=1+t+t^2,\\
\tilde{A}_3(t,-1)&=1+3t+3t^2+t^3,\\
\tilde{A}_4(t,-1)&=1+3t+5t^2+3t^3+t^4,\\
\tilde{A}_5(t,-1)&=1+7t+15t^2+15t^3+7t^4+t^5,\\
\tilde{A}_6(t,-1)&=1+7t+19t^2+25t^3+19t^4+7t^5+t^6.
\end{align*}
Here is the central result of this paper.
\begin{theorem}\label{thm:bino}
For any $n\geq1$, the signed binomial-Eulerian polynomial $\tilde{A}_n(t,-1)$ is palindromic and unimodal. 
\end{theorem}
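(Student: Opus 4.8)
Palindromicity is free: by Theorem~\ref{q-bino:gam} the expansion $\tilde A_n(t,q)=\sum_k\tilde\gamma_{n,k}(q)\,t^k(1+t)^{n-2k}$ is, for every fixed $q$, a combination of the degree-$n$ palindromic polynomials $t^k(1+t)^{n-2k}$, so $\tilde A_n(t,-1)$ is palindromic of degree $n$. Thus only unimodality is at issue, and it will not come from $\gamma$-positivity: already $\tilde A_4(t,-1)=(1+t)^4-t(1+t+t^2)$ has $\gamma$-vector $(1,-1,1)$, so the sign cancellations have to be confronted directly. This is precisely the role of the continued fraction.

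I would first make $\tilde A_n(t,-1)$ explicit. Feeding the companion sign-balance identity for the $(\maj,\exc)$-Eulerian polynomials, $A_m(t,-1)=(1+t)^{\lfloor m/2\rfloor}A_{\lceil m/2\rceil}(t)$, into $\tilde A_n(t,q)=1+t\sum_m{n\brack m}_qA_m(t,q)$, together with the evaluation ${n\brack m}_{q=-1}=\binom{\lfloor n/2\rfloor}{\lfloor m/2\rfloor}$ (which vanishes when $n$ is even and $m$ is odd), expresses $\tilde A_n(t,-1)$ as a binomial sum of Eulerian polynomials $A_j(t)$ weighted by powers of $(1+t)$, naturally split by the parity of $n$. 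I treat the even family $E_r(t):=\tilde A_{2r}(t,-1)$ and the odd family $O_r(t):=\tilde A_{2r+1}(t,-1)$ separately; for the latter I factor out the palindromic unimodal factor $(1+t)$, writing $O_r=(1+t)\hat O_r$, so that it suffices to treat $\hat O_r$, a palindromic polynomial of degree $2r$ centered at $r$, on the same footing as $E_r$.

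The heart of the matter is a Jacobi-type continued fraction for these subsequences. Using the new quadratic recursion for the $q$-binomial-Eulerian polynomials --- equivalently, an even/odd contraction of the standard $q$-Eulerian $J$-fraction that becomes available at $q=-1$ because $[2k]_{-1}=0$ --- I would establish
\[
\sum_{r\ge 0}E_r(t)\,z^r=\cfrac{1}{1-b_0(t)z-\cfrac{\lambda_1(t)z^2}{1-b_1(t)z-\cfrac{\lambda_2(t)z^2}{\ddots}}},
\]
and an analogous expansion for $\sum_r\hat O_r(t)z^r$, in which every $b_k(t)$ is a palindromic unimodal polynomial of degree $2$ centered at $1$ and every $\lambda_k(t)$ is a palindromic unimodal polynomial centered at $2$, all with nonnegative coefficients. (For $E_r$ the first coefficients are $b_0=1+t+t^2$, $b_1=2+3t+2t^2$, $\lambda_1=t(1+t)^2$; for $\hat O_r$ one finds $b_0=(1+t)^2$ and $\lambda_1=2t+3t^2+2t^3$, consistent with the displayed data.)

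Granting such a continued fraction, unimodality follows from the Motzkin-path model it encodes: $E_r(t)$ is the generating polynomial of weighted Motzkin paths of length $r$ in which a level step at height $k$ carries weight $b_k(t)$ and a down step from height $k$ carries weight $\lambda_k(t)$. A path with $\ell$ level steps and $u$ down steps has $\ell+2u=r$, so its weight is a product of palindromic unimodal factors whose centers add up to $\ell\cdot 1+u\cdot 2=r$; since the product of palindromic unimodal polynomials with nonnegative coefficients is again palindromic and unimodal (decompose each factor into symmetric rectangles and use that the convolution of two rectangles is a symmetric trapezoid), every path contributes a palindromic unimodal polynomial centered at $r$. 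Summing palindromic unimodal polynomials with the common center $r$ preserves both properties, so $E_r$, and likewise $\hat O_r$ and hence $O_r=(1+t)\hat O_r$, are palindromic and unimodal. The main obstacle is exactly the input to this last step: proving that the contraction at $q=-1$ produces continued-fraction coefficients that are simultaneously nonnegative, palindromic, unimodal, and centered at $1$ and $2$ respectively. This is where the quadratic recursion carries the load, and pinning down this centering is the crux on which the whole argument rests.
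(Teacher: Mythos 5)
Your architecture coincides with the paper's: reduce to the explicit polynomials coming from Corollary~\ref{sign-qbino}, expand their \emph{ordinary} generating function as a $J$-fraction whose coefficients $b_k$ and $\lambda_k$ are nonnegative, palindromic, unimodal and centered at $1$ and $2$ respectively, and conclude via the Jacobi--Rogers/Motzkin-path expansion, every path weight being a product of palindromic unimodal factors whose centers sum to $n$ by the length condition $\ell+2u=n$. Your guessed data for the even family is even literally correct: the paper proves \eqref{frac:star} with $b_k=k(1+t)^2+1+t+t^2$ and $\lambda_k=k^2t(1+t)^2$, which reproduces your $b_0$, $b_1$, $\lambda_1$.

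The gap is exactly where you locate it, and the mechanism you offer to fill it does not work as stated. There is no ``standard $q$-Eulerian $J$-fraction'' for $\tilde A_n(t,q)$ available to contract at $q=-1$ (the $J$-fraction \eqref{new:bino} concerns a different $(p,q)$-analogue $\hat A_n$, which does not specialize to $\tilde A_n(t,q)$), and the quadratic recursion of Theorem~\ref{rec:q-bino} does not by itself yield continued-fraction coefficients --- its role in the paper is only to produce recursion \eqref{bino:odd}. The continued fraction \eqref{frac:star} is instead obtained by a generating-function identification: from \eqref{def:star-A} and the classical EGF of $A_n(t)$ one computes $\sum_m A^*_m(t)x^m/m!=(t-1)e^{t^2x}/\bigl(t-e^{(t^2-1)x}\bigr)$, and after the substitution $x\mapsto(1+t)x$ this is recognized as Zeng's formula \eqref{fix-carlitz-scoville} with $a=d=t$, $b=c=1$, $\alpha=\frac{t^2+t+1}{1+t}$; hence $A^*_m(t)=(1+t)^mQ_m\bigl(t,1,1,t,\frac{t^2+t+1}{1+t}\bigr)$ and the $J$-fraction falls out of the known expansion \eqref{fix-carlitz-scoville-cf}. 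You would need to supply this (or an equivalent) argument. Note also that your treatment of the odd family demands a second, likewise unproven $J$-fraction for $\hat O_r$; this can be avoided entirely, since \eqref{bino:odd} together with $A_{2k}(t,-1)=(1+t)^kA_k(t)$ writes $\tilde A_{2n+1}(t,-1)$ as a nonnegative combination of products of palindromic unimodal polynomials all centered at $n+\frac12$, so the odd case follows from the even one at no extra cost.
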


 Although the palindromicity of $\tilde{A}_n(t,-1)$ 
 follows directly from the $q$-$\gamma$-positivity expansion~\eqref{bino:gam} of $\tilde{A}_n(t,q)$, it is not clear how to derive the unimodality in Theorem~\ref{thm:bino}
 from Theorem~\ref{q-bino:gam}. 
In showing the unimodality of $\tilde{A}_n(t,-1)$, we find a  new quadratic recursion for $\tilde{A}_n(t,q)$.
 
 \begin{theorem}\label{rec:q-bino}
The $q$-binomial-Eulerian polynomials satisfy the recurrence relation
\begin{equation*}
\tilde{A}_{n+1}(t,q)=(1+t)\tilde{A}_{n}(t,q)+t\sum_{k=1}^n{n\brack k}_qq^kA_k(t,q)\tilde{A}_{n-k}(t,q)
\end{equation*}
for $n\geq0$ with initial value $\tilde{A}_{0}(t,q)=1$.
\end{theorem}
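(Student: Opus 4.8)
The plan is to pass to the $q$-exponential generating functions and verify an equivalent functional equation. Write $F(z):=\sum_{n\ge0}A_n(t,q)\,z^n/(q;q)_n$, which by the $q$-analog of Euler's formula \eqref{exc:maj} equals $(1-t)e(z;q)/(e(tz;q)-te(z;q))$, and $\tilde F(z):=\sum_{n\ge0}\tilde A_n(t,q)\,z^n/(q;q)_n$, with the convention $\tilde A_0(t,q)=1$. The defining relation $\tilde A_n(t,q)=1-t+t\sum_{m=0}^n{n\brack m}_qA_m(t,q)$ (valid for all $n\ge0$ since $A_0=1$) expresses $\tilde A_n$ as a constant sequence plus a $q$-binomial convolution of $(A_m)$ with the all-ones sequence. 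Since a $q$-binomial convolution corresponds to the product of $q$-exponential generating functions, and the all-ones sequence has generating function $e(z;q)$, I get $\tilde F(z)=e(z;q)\big[(1-t)+tF(z)\big]$; substituting the formula for $F$ and simplifying collapses this to the clean identity $\tilde F(z)=e(tz;q)\,F(z)$.

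Next I would translate the claimed recursion into a statement about $\tilde F$. Two dictionary entries are needed. First, the index shift $n\mapsto n+1$ is realized by the $q$-difference quotient: a direct check on $z^n/(q;q)_n$ gives $\sum_{n\ge0}\tilde A_{n+1}(t,q)\,z^n/(q;q)_n=\big(\tilde F(z)-\tilde F(qz)\big)/z$. Second, the weight $q^k$ on $A_k$ in the convolution is exactly the effect of the dilation $z\mapsto qz$, so $\sum_{k=0}^n{n\brack k}_qq^kA_k(t,q)\tilde A_{n-k}(t,q)$ has generating function $F(qz)\tilde F(z)$; removing the $k=0$ term $\tilde A_n$ turns the sum $\sum_{k=1}^n$ into $\tilde F(z)\big[F(qz)-1\big]$. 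Collecting the two terms on the right-hand side of the recursion, its generating function is $\tilde F(z)\big[1+tF(qz)\big]$, so the recursion is equivalent to the functional equation $\tilde F(z)-\tilde F(qz)=z\,\tilde F(z)\big[1+tF(qz)\big]$.

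Finally I would reduce this to an identity for $F$ alone and verify it. Using $\tilde F(z)=e(tz;q)F(z)$ together with the basic functional equation $e(qw;q)=(1-w)e(w;q)$ with $w=tz$, one has $\tilde F(qz)=(1-tz)e(tz;q)F(qz)$; dividing the functional equation through by the unit power series $e(tz;q)$ leaves $F(z)-(1-tz)F(qz)=zF(z)\big[1+tF(qz)\big]$. This last identity I would check by clearing denominators in the explicit formula for $F$: writing $D:=e(tz;q)-te(z;q)$ for the denominator of $F$, the denominator of $F(qz)$ is $D-tz\big(e(tz;q)-e(z;q)\big)$, and the numerators on both sides reduce, after cancellation of the common factor $(1-t)\,e(z;q)$, to $(1-tz)e(tz;q)-t^2(1-z)e(z;q)$. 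The main obstacle is purely organizational rather than conceptual: getting the two generating-function dictionary entries exactly right (the $q$-difference quotient for the shift and the dilation for the $q^k$ weight, including the off-by-one from the $k=0$ term) is where a sign or a stray factor most easily creeps in; once the functional equation is set up correctly, its verification is a short symmetric computation.
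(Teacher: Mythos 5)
Your proposal is correct and takes essentially the same approach as the paper: both pass to the $q$-exponential generating function, establish $\sum_{n\geq0}\tilde{A}_n(t,q)z^n/(q;q)_n=(1-t)e(z;q)e(tz;q)/(e(tz;q)-te(z;q))$, and identify the recursion with the $q$-difference equation $\bigl(\tilde F(z)-\tilde F(qz)\bigr)/z=\tilde F(z)\bigl(1+tF(qz)\bigr)$. The only difference is organizational — the paper computes $\delta_z$ of the closed form forward via the product rule for the Eulerian differential operator, while you verify the equivalent functional equation backward using $\tilde F(z)=e(tz;q)F(z)$ and $e(qw;q)=(1-w)e(w;q)$ — and your algebra checks out.
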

 
As will be seen,
two specializations of  this recursion together with a continued fraction expansion conclude the desired unimodality of $\tilde{A}_n(t,-1)$ in~Theorem~\ref{thm:bino}. Via the machinery of continued fraction, we will also prove a new $(p,q)$-extension of the $\gamma$-positivity of binomial-Eulerian polynomials. 

The rest of this paper is organized as follows. 
In Section~\ref{sign:qbino}, we show how to derive~\eqref{foata} and the sign-balance identity of
   the binomial-Eulerian polynomials using appropriate 
   quadratic recursions and prove Theorem~\ref{rec:q-bino}.
In Section~\ref{sec:bino}, we show Theorem~\ref{int:bin} and present the Modified Foata--Strehl group action proof of Theorem~\ref{q-bino:gam}. 
In Section~\ref{sec:unimodal}, via the machinery of continued fraction, we prove the  unimodality of $\tilde{A}_n(t,-1)$ and show a $(p,q)$-extension of the $\gamma$-positivity of  binomial-Eulerian polynomials.     
We end this paper with two log-concavity conjectures.

\section{
Quadratic recursions and sign-balance of $q$-binomial-Eulerian polynomials}
\label{sign:qbino}
In this section, we investigate the sign-balance of $q$-binomial-Eulerian polynomials. We begin with a new simple approach to identity~\eqref{foata}.
The following lemma is useful.
\begin{lemma}[cf.~\cite{df}]
\label{lem:lim}
For any integers $m\geq i\geq0$,
\begin{equation*}
\lim_{q \to -1}{2m \brack 2i}_{q}=\lim_{q \to -1}{2m+1 \brack 2i}_{q}=\lim_{q \to -1}{2m+1 \brack 2i+1}_{q}={m\choose i}\quad\text{and}\quad\lim_{q \to -1}{2m\brack 2i+1}_{q}=0.
\end{equation*}
\end{lemma}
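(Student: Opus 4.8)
The plan is to reduce everything to evaluating the Gaussian binomial coefficients at $q=-1$. Since each ${n\brack k}_q$ is a polynomial in $q$, the limit as $q\to-1$ equals the evaluation ${n\brack k}_{-1}$, so it suffices to compute these four families of values. The tool I would use is the Gaussian binomial theorem (Rothe's identity)
\[
\prod_{j=0}^{n-1}(1+q^j x)=\sum_{k=0}^n q^{\binom{k}{2}}{n\brack k}_q x^k,
\]
specialized at $q=-1$, after which all four evaluations fall out by extracting coefficients of $x^k$.

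First I would set $q=-1$ on the left-hand side, where the factors pair as $(1+x)(1-x)=1-x^2$. For $n=2m$ this collapses the product to $(1-x^2)^m=\sum_{i}(-1)^i\binom{m}{i}x^{2i}$, so only even powers of $x$ survive; for $n=2m+1$ one extra factor remains and the product becomes $(1-x^2)^m(1+x)$, whose expansion contributes $(-1)^i\binom{m}{i}$ to both $x^{2i}$ and $x^{2i+1}$. On the right-hand side, at $q=-1$, the coefficient of $x^k$ is $(-1)^{\binom{k}{2}}{n\brack k}_{-1}$, so matching coefficients with the left-hand side is all that remains.

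The one arithmetic point requiring care is the parity of $\binom{k}{2}$: writing $\binom{2i}{2}=i(2i-1)$ and $\binom{2i+1}{2}=i(2i+1)$ shows that in both cases $(-1)^{\binom{k}{2}}=(-1)^i$, since $2i\pm1$ is odd. Granting this, the comparison for $n=2m$ forces ${2m\brack 2i+1}_{-1}=0$ (no odd powers on the left) and $(-1)^i{2m\brack 2i}_{-1}=(-1)^i\binom{m}{i}$, while for $n=2m+1$ the even- and odd-degree comparisons give $(-1)^i{2m+1\brack 2i}_{-1}=(-1)^i\binom{m}{i}$ and $(-1)^i{2m+1\brack 2i+1}_{-1}=(-1)^i\binom{m}{i}$ respectively; cancelling the common signs yields exactly the four claimed identities. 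I do not expect a genuine obstacle here---the statement is the specialization of the $q$-Lucas theorem at the primitive square root of unity $-1$, and the generating-function route is self-contained once Rothe's identity is granted---so the only thing demanding attention is the sign bookkeeping of $(-1)^{\binom{k}{2}}$ and keeping the two parities of $n$ cleanly separated.
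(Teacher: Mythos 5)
Your proof is correct. Note, though, that the paper does not actually prove this lemma: it is stated with the citation ``cf.~\cite{df}'' and used as a known fact, so there is no in-paper argument to compare against. Your route through Rothe's identity
$\prod_{j=0}^{n-1}(1+q^jx)=\sum_{k}q^{\binom{k}{2}}{n\brack k}_qx^k$
is a clean, self-contained way to get all four evaluations at once: at $q=-1$ the left side collapses to $(1-x^2)^m$ or $(1-x^2)^m(1+x)$ according to the parity of $n$, and your sign bookkeeping $(-1)^{\binom{2i}{2}}=(-1)^{\binom{2i+1}{2}}=(-1)^i$ is right, so extracting coefficients gives exactly the claimed values. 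Your preliminary observation that ${n\brack k}_q\in\Z[q]$, so the limit is just polynomial evaluation, is also the correct way to dispose of the $\lim_{q\to-1}$ phrasing; the limit appears in the statement only because the naive ratio $(q;q)_n/\bigl((q;q)_k(q;q)_{n-k}\bigr)$ is of the form $0/0$ at $q=-1$ (every factor $1-q^i$ with $i$ even vanishes there), which is precisely the difficulty a direct computation from the product formula would have to handle by pairing and cancelling vanishing factors before passing to the limit. Your generating-function argument sidesteps that entirely, which is what it buys over the more pedestrian factor-matching proof one finds in the cited source.
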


Let us define the {\em$(\inv,\des)$-Eulerian polynomials} by
$$
A_n^{\des,\inv}(t,q):=\sum_{\pi\in\S_n}t^{\des(\pi)}q^{\inv(\pi)}.
$$ 
Chow~\cite{ch} gave a combinatorial proof of  the quadratic  recursion 
\begin{equation}\label{rec:desinv}
A_{n+1}^{\des,\inv}(t,q)=(1+tq^n)A_n^{\des,\inv}(t,q)+t\sum_{k=1}^{n-1}{n\brack k}_qq^kA_{n-k}^{\des,\inv}(t,q)A_k^{\des,\inv}(t,q).
\end{equation}
 Taking $q=1$, we obtain 
\begin{equation}\label{rec:euler}
A_{n+1}(t)=(1+t)A_n(t)+t\sum_{k=1}^{n-1}{n\choose k}A_{n-k}(t)A_k(t).
\end{equation}

\begin{proof}[{\bf A new  simple proof of  \eqref{foata}.}]
 We proceed by induction on $n$. Assume that~\eqref{foata} holds  for $n$ up to $2m-1$.  It then follows from recursion~\eqref{rec:desinv} and Lemma~\ref{lem:lim} that 
\begin{align*}
A_{2m}^{\des,\inv}(t,-1)&=(1-t)A_{2m-1}^{\des,\inv}(t,-1)+t\sum_{k=1}^{m-1}{m-1\choose k}A_{2(m-k)-1}^{\des,\inv}(t,-1)A_{2k}^{\des,\inv}(t,-1)\\
&\quad-t\sum_{k=0}^{m-2}{m-1\choose k}A_{2(m-k-1)}^{\des,\inv}(t,-1)A_{2k+1}^{\des,\inv}(t,-1)\\
&=(1-t)A_{2m-1}^{\des,\inv}(t,-1)=(1-t)^mA_m(t)
\end{align*}
and 
\begin{align*}
A_{2m+1}^{\des,\inv}(t,-1)&=(1+t)A_{2m}^{\des,\inv}(t,-1)+t\sum_{k=1}^{m-1}{m\choose k} A_{2(m-k)}^{\des,\inv}(t,-1)A_{2k}^{\des,\inv}(t,-1)\\
&=(1-t)^mA_{m+1}(t),
\end{align*}
where the last equality follows from  the recurrence relation~\eqref{rec:euler}. This completes the proof of  \eqref{foata} by induction. 
\end{proof}

The first author~\cite[Theorem~2]{lin} showed 
that one can derive the following quadratic 
recursion for $A_n(t,q)$, which is 
  a $q$-analog of recursion~\eqref{rec:euler}:
\begin{equation}\label{lin-A}
A_{n+1}(t,q)=(1+t)A_n(t,q)+t\sum_{k=1}^{n-1}{n\brack k}_qq^kA_k(t,q)A_{n-k}(t,q).
\end{equation}
By applying this recursion, the following major-balance identity can be proved through the same approach as \eqref{foata}, the details of which are omitted due to the similarity. 

\begin{theorem}\label{sign:qeul}
For $n\geq1$, we have 
\begin{equation*}
A_n(t,-1)=
\begin{cases}
\,\,(1+t)^mA_{m}(t), \qquad\qquad&\text{if $n=2m$};\\
\,\,(1+t)^mA_{m+1}(t),&\text{if $n=2m+1$}.
\end{cases}
\end{equation*}
\end{theorem}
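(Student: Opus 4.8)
The plan is to mirror the inductive argument just given for \eqref{foata}, replacing the recursion \eqref{rec:desinv} by its $(\maj,\exc)$-analog \eqref{lin-A}. The crucial structural difference is that the leading coefficient in \eqref{lin-A} is the parity-independent $(1+t)$ rather than $(1+tq^n)$; specializing at $q=-1$ therefore always produces a factor $(1+t)$, which is exactly what accounts for the $(1+t)^m$ prefactor in the claimed identity, in contrast to the $(1-t)^m$ appearing in \eqref{foata}. I would first dispose of the base cases $n=1,2$ directly, since $A_1(t,-1)=1$ and $A_2(t,-1)=1+t$ match the formula with $m=0$ and $m=1$; then, assuming the identity for all smaller indices, I would split into the two parities of $n$.

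For the even case $n=2m$, apply \eqref{lin-A} with $n$ replaced by $2m-1$. Writing $2m-1=2(m-1)+1$ and invoking Lemma~\ref{lem:lim}, every $q$-binomial ${2m-1\brack k}_q$ tends to $\binom{m-1}{\lfloor k/2\rfloor}$ at $q=-1$ for \emph{both} parities of $k$, so after inserting the sign $q^k=(-1)^k$ the sum splits into an even-$k$ part (with sign $+$) and an odd-$k$ part (with sign $-$). The key step is to show these two parts cancel: substituting the induction hypothesis turns each summand into $\binom{m-1}{\cdot}(1+t)^{m-1}A_a(t)A_b(t)$ with $a+b=m$, and the substitution $j\mapsto m-1-j$, which exploits the symmetries $A_a(t)A_b(t)=A_b(t)A_a(t)$ and $\binom{m-1}{j}=\binom{m-1}{m-1-j}$, carries the even-$k$ sub-sum exactly onto the odd-$k$ one. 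What survives is $(1+t)A_{2m-1}(t,-1)=(1+t)\cdot(1+t)^{m-1}A_m(t)=(1+t)^mA_m(t)$, as desired.

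For the odd case $n=2m+1$, apply \eqref{lin-A} with $n$ replaced by $2m$. Now Lemma~\ref{lem:lim} annihilates all the odd-index $q$-binomials, ${2m\brack 2j+1}_q\to0$, leaving only the even-$k$ terms $\binom{m}{j}A_{2j}(t,-1)A_{2m-2j}(t,-1)$. Substituting the induction hypothesis and factoring out $(1+t)^m$ reduces the whole expression to
\[
(1+t)^m\Big[(1+t)A_m(t)+t\sum_{j=1}^{m-1}\binom{m}{j}A_j(t)A_{m-j}(t)\Big],
\]
and the bracketed quantity is precisely $A_{m+1}(t)$ by the classical quadratic recurrence \eqref{rec:euler}. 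This yields $A_{2m+1}(t,-1)=(1+t)^mA_{m+1}(t)$ and closes the induction.

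I expect the only delicate point to be the parity bookkeeping in the $q$-binomials, namely applying Lemma~\ref{lem:lim} with the correct shift ($2m-1=2(m-1)+1$ in the even case versus $2m$ in the odd case), which is what decides whether the odd-index terms survive, together with the symmetric-cancellation step in the even case. Both are purely formal once the index ranges are tracked carefully, so the argument is genuinely a transcription of the proof of \eqref{foata} with $(1-t)$ systematically replaced by $(1+t)$; this is presumably why the authors deemed the details omissible.
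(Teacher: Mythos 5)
Your proposal is correct and is exactly the argument the paper intends: the authors state that Theorem~\ref{sign:qeul} follows "through the same approach as \eqref{foata}" using the recursion \eqref{lin-A}, and your induction — with the parity split of the $q$-binomials via Lemma~\ref{lem:lim}, the symmetric cancellation $j\mapsto m-1-j$ in the even case, and the reduction to \eqref{rec:euler} in the odd case — supplies precisely the omitted details.
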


The above identity for even $n$  appeared in~\cite[Corollary~6.2]{ssw}. 
An immediate consequence of Theorem~\ref{sign:qeul} and Lemma~\ref{lem:lim} is the following signed identity for $\tilde{A}_n(t,q)$. 


\begin{corollary}\label{sign-qbino}
For $n\geq1$, we have
\begin{equation*}
\tilde{A}_n(t,-1)=
\begin{cases}
\,\,1+t\sum_{k=1}^m{m\choose k}(1+t)^kA_k(t), \qquad\qquad&\text{if $n=2m$};\\
\,\,1+t\sum_{k=0}^m{m\choose k}(1+t)^k(A_k(t)+A_{k+1}(t)),&\text{if $n=2m+1$}.
\end{cases}
\end{equation*}
Here we use the convention $A_0(t)=0$. 
\end{corollary}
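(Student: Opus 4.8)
The plan is to compute $\tilde{A}_n(t,-1)$ directly from its definition by specializing $q=-1$ term by term, feeding in the closed forms already available to us: Lemma~\ref{lem:lim} evaluates the $q$-binomial coefficients, and Theorem~\ref{sign:qeul} evaluates the signed Eulerian polynomials $A_m(t,-1)$. Since each ${n\brack j}_q$ is a polynomial in $q$ and each $A_j(t,q)$ lies in $\Z[t,q]$, the polynomial $\tilde{A}_n(t,q)$ specializes at $q=-1$ termwise, so I may write
\[
\tilde{A}_n(t,-1)=1+t\sum_{j=1}^n\Bigl(\lim_{q\to-1}{n\brack j}_q\Bigr)A_j(t,-1),
\]
and then split the sum according to the parity of the index $j$, treating $n=2m$ and $n=2m+1$ separately.

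For the even case $n=2m$, Lemma~\ref{lem:lim} gives ${2m\brack j}_q\to 0$ whenever $j$ is odd, so every odd-indexed summand is annihilated and only the terms $j=2k$ with $1\le k\le m$ survive, each carrying the factor $\lim_{q\to-1}{2m\brack 2k}_q={m\choose k}$. Substituting the even branch $A_{2k}(t,-1)=(1+t)^kA_k(t)$ of Theorem~\ref{sign:qeul} then produces exactly the claimed expression $1+t\sum_{k=1}^m{m\choose k}(1+t)^kA_k(t)$.

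For the odd case $n=2m+1$, both parities now contribute, since Lemma~\ref{lem:lim} yields ${2m+1\brack 2i}_q\to{m\choose i}$ and ${2m+1\brack 2i+1}_q\to{m\choose i}$. The even-indexed terms $j=2i$ ($1\le i\le m$) give $\sum_{i=1}^m{m\choose i}(1+t)^iA_i(t)$ via the even branch of Theorem~\ref{sign:qeul}, while the odd-indexed terms $j=2i+1$ ($0\le i\le m$) give $\sum_{i=0}^m{m\choose i}(1+t)^iA_{i+1}(t)$ via the odd branch. The final bookkeeping step is to observe that, under the convention $A_0(t)=0$, the $i=0$ term of the first sum vanishes; this lets me start that sum at $i=0$ as well and merge the two into $1+t\sum_{i=0}^m{m\choose i}(1+t)^i\bigl(A_i(t)+A_{i+1}(t)\bigr)$, as asserted.

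I do not expect any genuine obstacle here: the argument is a direct specialization combining two results we are permitted to assume. The only point that demands care is the index bookkeeping, namely matching the ranges of $k$ (respectively $i$) between the surviving terms of the specialized sum and the target closed form. It is precisely the $A_0(t)=0$ convention that makes the odd case collapse cleanly, and the vanishing of the odd $q$-binomials at $q=-1$ that makes the even case collapse; verifying the first few values ($n=1,\dots,6$) against the table preceding Theorem~\ref{thm:bino} would serve as a useful sanity check.
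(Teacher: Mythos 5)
Your proof is correct and is exactly the argument the paper has in mind: the corollary is stated there as an ``immediate consequence'' of Theorem~\ref{sign:qeul} and Lemma~\ref{lem:lim}, obtained by specializing the defining sum for $\tilde{A}_n(t,q)$ at $q=-1$ and splitting by parity of the summation index. Your write-up simply supplies the index bookkeeping that the paper omits, including the correct use of the convention $A_0(t)=0$ in the odd case.
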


In the rest of this section, we give the proof of  Theorem~\ref{rec:q-bino}. The {\em Eulerian differential operator} $\delta_z$ 
used below is defined by
$$\delta_z(f(z)):=\frac{f(z)-f(qz)}{z}$$
for any formal power series $f(z)$ over the ring of real polynomials in $q$.
It is not difficult to show for any variable $\alpha$, that
\begin{align}\label{q-dif}
\delta_z(e(\alpha z;q))=\alpha e(\alpha z;q). 
\end{align}

\begin{proof}[{\bf Proof of Theorem~\ref{rec:q-bino}.}]

We begin with the calculation of  the exponential  generating function of $\tilde{A}_n(t,q)$. 
By using~\eqref{exc:maj}, we can deduce that
\begin{align*}
\sum_{n\geq0}\tilde{A}_n(t,q)\frac{z^n}{(q;q)_n}&=\sum_{n\geq0}\biggl(1+t\sum_{m=1}^n{n\brack m}_qA_m(t,q)\biggr)\frac{z^n}{(q;q)_n}\\
&=\sum_{n\geq0}(1-t)\frac{z^n}{(q;q)_n}+\sum_{n\geq0}\biggl(t\sum_{m=0}^n{n\brack m}_qA_m(t,q)\biggr)\frac{z^n}{(q;q)_n}\\
&=(1-t)e(z;q)+t\left(\sum_{n\geq0}\frac{z^n}{(q;q)_n}\right)\left(\sum_{n\geq0}A_n(t,q)\frac{z^n}{(q;q)_n}\right)\\
&=(1-t)e(z;q)+te(z;q)\frac{(1-t)e(z;q)}{e(tz;q)-te(z;q)},
\end{align*}
which is simplified to 
\begin{equation}\label{bino:euler}
\sum_{n\geq0}\tilde{A}_n(t,q)\frac{z^n}{(q;q)_n}=\frac{(1-t)e(z;q)e(tz;q)}{e(tz;q)-te(z;q)}.
\end{equation}

Applying the operator $\delta_z$ to both sides of ~\eqref{bino:euler} and using property~\eqref{q-dif} and the product rule of the Eulerian differential operator (see~\cite[Lemma~7]{lin}) yields 
\begin{align*}
&\quad\sum_{n\geq0}\tilde{A}_{n+1}(t,r,q)\frac{z^n}{(q;q)_n}\\
&=\delta_z\left(\frac{(1-t)e(z; q)e(tz;q)}{e(tz; q)-te(z;q)}\right)\\
&=\frac{\delta_z((1-t)e(z; q)e(tz;q))}{e(tz; q)-te(z;q)}+\delta_z\left((e(tz; q)-te(z;q))^{-1}\right)(1-t)e(zq; q)e(tzq;q)\\
&=\frac{(1-t)e(tz; q)(te(zq;q)+e(z;q))}{e(tz; q)-te(z;q)}+\frac{(1-t)e(zq; q)e(tzq;q)(te(z; q)-te(tz; q))}{(e(tqz; q)-te(qz;q))(e(tz; q)-te(z;q))}\\
&=\frac{(1-t)e(z;q)e(tz; q)}{e(tz; q)-te(z;q)}+t\frac{(1-t)e(z;q)e(tz; q)}{e(tz; q)-te(z;q)}\frac{(1-t)e(zq;q)}{e(tzq; q)-te(zq;q)}\\
&\quad\ +\frac{t(1-t)e(zq;q)\Delta(t,q)}{(e(tz;q)-te(z;q))(e(tzq; q)-te(zq;q))},
\end{align*}
where  
\begin{align*}
\Delta(t,q)&:=te(tz;q)[e(z;q)-e(zq;q)]-
[e(tz;q)-e(tzq;q)]e(z;q)\\
&=tze(tz;q)\delta_z(e(z;q))-ze(z;q)\delta_z(e(tz;q)).
\end{align*}
Invoking \eqref{q-dif} we see immediately 
that $\Delta(t,q)=0$, and so
$$
\sum_{n\geq0}\tilde{A}_{n+1}(t,r,q)\frac{z^n}{(q;q)_n}=\frac{(1-t)e(z;q)e(tz; q)}{e(tz; q)-te(z;q)}+t\frac{(1-t)e(z;q)e(tz; q)}{e(tz; q)-te(z;q)}\frac{(1-t)e(zq;q)}{e(tzq; q)-te(zq;q)}.\nonumber
$$
Extracting the coefficient of $z^n/(q;q)_n$ from both sides, we obtain Theorem~\ref{rec:q-bino}.
\end{proof}

A direct consequence of Theorem~\ref{rec:q-bino} and Lemma~\ref{lem:lim} is the following recurrence relations for $\tilde{A}_n(t,-1)$, involving the signed Eulerian polynomials $A_n(t,-1)$.
\begin{corollary}\label{cor:sigbino}
For $n\geq0$, we have 
\begin{equation}\label{bino:odd}
\tilde{A}_{2n+1}(t,-1)=(1+t)\tilde{A}_{2n}(t,-1)+t\sum_{k=1}^n{n\choose k}A_{2k}(t,-1)\tilde{A}_{2n-2k}(t,-1)
\end{equation}
and 
\begin{align*}
\tilde{A}_{2n+2}(t,-1)&=(1+t)\tilde{A}_{2n+1}(t,-1)+t\sum_{k=1}^n{n\choose k}A_{2k}(t,-1)\tilde{A}_{2n+1-2k}(t,-1)\\
&\quad-t\sum_{k=0}^n{n\choose k}A_{2k+1}(t,-1)\tilde{A}_{2n-2k}(t,-1).
\end{align*}
\end{corollary}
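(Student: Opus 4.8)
The plan is to obtain both recurrences by letting $q\to-1$ in the quadratic recursion of Theorem~\ref{rec:q-bino} and evaluating the resulting limits of $q$-binomial coefficients by means of Lemma~\ref{lem:lim}. Since $A_n(t,q)$ and $\tilde{A}_n(t,q)$ are polynomials in $q$, the specialization $q=-1$ is legitimate by continuity, and the only genuinely limiting factors are the Gaussian binomials, which the lemma handles. The key structural observation is that the top index $N$ of the $q$-binomial coefficient in the recursion (where one specializes $\tilde{A}_{N+1}$) has a different parity in the two cases, and this parity governs both which summands survive and the sign carried by the factor $q^k$.

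For the odd identity~\eqref{bino:odd}, I would take $N=2n$ in Theorem~\ref{rec:q-bino}, so that the recursion reads
\[
\tilde{A}_{2n+1}(t,q)=(1+t)\tilde{A}_{2n}(t,q)+t\sum_{k=1}^{2n}{2n\brack k}_q q^k A_k(t,q)\tilde{A}_{2n-k}(t,q).
\]
Letting $q\to-1$, Lemma~\ref{lem:lim} gives $\lim_{q\to-1}{2n\brack k}_q=0$ for every odd $k$, so only the even indices $k=2j$ survive; for these ${2n\brack 2j}_q\to{n\choose j}$ and $q^{2j}\to1$, producing exactly the single sum in~\eqref{bino:odd} after reindexing with $1\le j\le n$.

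For the even identity I would instead take $N=2n+1$, giving
\[
\tilde{A}_{2n+2}(t,q)=(1+t)\tilde{A}_{2n+1}(t,q)+t\sum_{k=1}^{2n+1}{2n+1\brack k}_q q^k A_k(t,q)\tilde{A}_{2n+1-k}(t,q).
\]
Now Lemma~\ref{lem:lim} shows that \emph{both} parities of $k$ survive as $q\to-1$, since ${2n+1\brack 2j}_q\to{n\choose j}$ and ${2n+1\brack 2j+1}_q\to{n\choose j}$. The even terms $k=2j$ (with $1\le j\le n$) carry $q^{2j}\to1$ and assemble into the first sum, whereas the odd terms $k=2j+1$ (with $0\le j\le n$) carry $q^{2j+1}\to-1$ and, together with the identification $\tilde{A}_{2n+1-(2j+1)}=\tilde{A}_{2n-2j}$, assemble into the second sum with its minus sign.

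This computation is essentially routine once the two cited results are in hand, so I do not anticipate a serious obstacle. The only point requiring care is the bookkeeping of the parity split in the even case, where---in contrast to the odd case---the odd-indexed summands do not vanish but reappear with the sign $q^k\to(-1)^k$ that precisely distinguishes the two sums of opposite sign.
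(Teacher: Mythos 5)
Your proposal is correct and is exactly the argument the paper intends: the paper states the corollary as a ``direct consequence of Theorem~\ref{rec:q-bino} and Lemma~\ref{lem:lim}'' without writing out the details, and your specializations $N=2n$ and $N=2n+1$ with the parity-based evaluation of $\lim_{q\to-1}{N\brack k}_q$ and of $q^k$ supply precisely those details. The bookkeeping in both cases checks out, including the sign on the odd-indexed sum in the even case.
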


\section{Proof of Theorems~\ref{int:bin} and \ref{q-bino:gam}}
\label{sec:bino}

We shall prove Theorems~\ref{int:bin} and \ref{q-bino:gam} in Subsections~\ref{ssec1}
and~\ref{ssec2} respectively.

\subsection{A combinatorial interpretation of $\tilde{A}_n(t,q)$}\label{ssec1}

We need the following classical interpretation of the $q$-binomial coefficients (cf.~\cite[Prop.~1.3.17]{st0}):
\begin{equation}\label{eq:qmul}
{n\brack  k}_{q}=\sum_{({\mathcal A},\,{\mathcal B})}q^{\inv({\mathcal A},\,{\mathcal B})},
\end{equation}
where the sum is over all ordered set partitions $({\mathcal A}, {\mathcal B})$ of $[n]$ such that $|{\mathcal A}|=k$ and 
$$
\inv({\mathcal A}, {\mathcal B}):=\{(i,j)\in {\mathcal A}\times {\mathcal B}\colon i>j\}.
$$

\begin{proof}[{\bf Proof of Theorem~\ref{int:bin}}]
We will show that the bivariant polynomial
 $$\tilde{B}_n(t,q):=\sum_{\pi\in\PRW_{n+1}}t^{\des(\pi)}q^{\ai(\pi)}$$
  satisfies the same recurrence relation as $\tilde{A}_n(t,q)$ in Theorem~\ref{rec:q-bino}. For each  $0\leq k\leq n$, let  
$$
\mathcal{B}_{n+1,k}:=\{\pi\in\PRW_{n+1}\colon \pi_{n+1-k}=n+1\}
$$
and introduce the refinement $\tilde{B}_{n,k}(t,q)$ of $\tilde{B}_n(t,q)$ by
$$
\tilde{B}_{n,k}(t,q):=\sum_{\pi\in\mathcal{B}_{n+1,k}}t^{\des(\pi)}q^{\ai(\pi)}.
$$
It is clear that  $\tilde{B}_n(t,q)=\sum_{k=0}^n\tilde{B}_{n,k}(t,q)$, $\tilde{B}_{n,n}(t,q)=t\tilde{B}_{n-1}(t,q)$ and $\tilde{B}_{n,0}(t,q)=\tilde{B}_{n-1}(t,q)$. The desired result then follows from the claim that  
\begin{equation}\label{eq:cla}
\tilde{B}_{n,k}(t,q)=t{n-1\brack  k}_{q}q^kA_k(t,q)\tilde{B}_{n-1-k}(t,q)\quad\text{for any $1\leq k\leq n-1$}. 
\end{equation}

It remains to show the above claim. For a set $X$ of distinct positive integers, we denote by ${X\choose m}$ the $m$-element subsets of $X$, by $\S_X$ the set of permutations of $X$ and by $\PRW_X$ the set of all permutations in $\S_X$ whose first ascent entry is $\min(X)$. Let $\mathcal{W}(n,k)$ be the set of all triples $(W,\pi_L,\pi_R)$ such that $W\in{[n]\setminus\{1\}\choose k}$, $\pi_L\in\PRW_{[n]\setminus W}$ and $\pi_R\in\S_W$. Note that  for every permutation in $\mathcal{B}_{n+1,k}$ ($1\leq k\leq n-1$), the entry $n+1$ appears to the right of the entry $1$. Therefore, one can check easily that the mapping $\pi\mapsto (W,\pi_L,\pi_R)$ defined by 
\begin{itemize}
\item $W=\{\pi_i\colon n+2-k\leq i\leq n+1\}$,
\item $\pi_L=\pi_1\pi_2\cdots\pi_{n-k}$ and $\pi_R=\pi_{n+2-k}\pi_{n+3-k}\cdots\pi_{n+1}$,
\end{itemize}
is a bijection between $\mathcal{B}_{n+1,k}$ and $\mathcal{W}(n,k)$ satisfying 
$$
\des(\pi)=\des(\pi_L)+\des(\pi_R)+1
$$
and 
$$
\ai(\pi)=\ai(\pi_L)+\ai(\pi_R)+\inv([n]\setminus W,W)+k.
$$
It follows from this bijection and the interpretations~\eqref{q-eul} and~\eqref{eq:qmul} that claim~\eqref{eq:cla} holds, which completes the proof. 
\end{proof}

As an example of Theorem~\ref{int:bin}, the permutations in $\PRW_{4}$ with two descents are 
$1432$, $3142$, $4132$, $2143$, $4312$, $4213$ and $3214$,  which contribute the monomial $(2q^2+2q+3)t^2$ to $\tilde{A}_3(t,q)$. 

\subsection{A group-action proof of the $q$-$\gamma$-positivity of $\tilde A_n(t,q)$}\label{ssec2}


Let us review briefly the Modified Foata--Strehl group action originally inspired by work of Foata and Strehl~\cite{fst}. Let $\sigma\in\S_n$, for  any  $x\in[n]$, the {\em$x$-factorization} of $\sigma$ reads
$
\sigma=w_1 w_2x w_3 w_4,
$
 where $w_2$ (resp.~$w_3$) is the maximal contiguous subword immediately to the left (resp.~right)  of $x$ whose letters are all smaller than $x$. Following~\cite{fst} we define $\varphi_x(\sigma)=w_1 w_3x w_2 w_4$.  For instance, if $x=5$ and $\sigma=63157248\in\S_8$, then $w_1=6,w_2=31,w_3=\emptyset$ and $w_4=7248$.
 Thus $\varphi_x(\sigma)=65317248$. Introduce the modified action $\varphi'_x$ on $\sigma$ by
  $$ \varphi'_x(\sigma):=
 \begin{cases}
\varphi_x(\sigma), &\text{if $x$ is a double ascent  or  double descent of $\sigma$};\\
\sigma,& \text{if $x$ is a valley or a peak of $\sigma$.}
\end{cases}
$$

 It is clear that the $\varphi'_x$'s are involutions and  commute. Therefore, for any subset $S\subseteq[n]$ we can define the function $\varphi'_S\colon\S_n\rightarrow\S_n$ by 
$$
\varphi'_S(\sigma)=\prod_{x\in S}\varphi'_x(\sigma),
$$
where the multiplication is the composition of functions.
Hence the group $\mathbb{Z}_2^n$ acts on $\S_n$ via the functions $\varphi'_S$, where $S\subseteq[n]$. This action is called  the {\em Modified Foata--Strehl action} ({\em MFS-action} for short)
 and has a nice visualization as depicted in Fig.~\ref{valhop}. Note that this MFS-action is exactly the same as the  version used in~\cite{lz}.

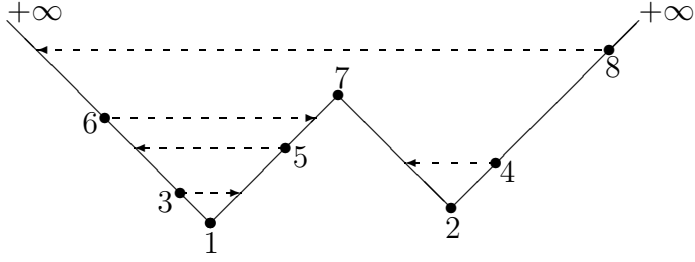
\begin{figure}[h!]
\setlength {\unitlength} {0.8mm}
\begin {picture} (90,40) \setlength {\unitlength} {1mm}
\thinlines
\put(24,8){\dashline{1}(-1,0)(-8,0)}
\put(24,8){\vector(1,0){0.1}}

\put(10,14){\dashline{1}(1,0)(20,0)}
\put(10,14){\vector(-1,0){0.1}}

\put(6,18){\dashline{1}(1,0)(28,0)}
\put(34,18){\vector(1,0){0.1}}

\put(46,12){\dashline{1}(1,0)(12,0)}
\put(46,12){\vector(-1,0){0.1}}

\put(73,27){\dashline{1}(-1,0)(-76,0)}
\put(-3,27){\vector(-1,0){0.1}}

\put(20,4){\line(-1,1){27}}\put(-7,31){$+\infty$}

\put(30,14){\circle*{1.3}}\put(31,11.5){$5$}

\put(6,18){\circle*{1.3}}\put(3,16){$6$}
\put(20,4){\circle*{1.3}}
\put(20,4){\circle*{1.3}}\put(19.1,0){$1$}
\put(16,8){\circle*{1.3}}\put(13,5.5){$3$}
\put(52,6){\circle*{1.3}}\put(51.2,2){$2$}
\put(20,4){\line(1,1){17}}\put(37,21){\circle*{1.3}}
\put(37,21){\line(1,-1){15}}\put(36.5,22){$7$}
\put(58,12){\circle*{1.3}}\put(58.5,9.5){$4$}
\put(52,6){\line(1,1){25}}\put(73,27){\circle*{1.3}}\put(72.5,23.5){$8$}
\put(77,31){$+\infty$}
\end{picture}
\caption{MFS-actions on $63157248$
\label{valhop}}
\end {figure}

\begin{proof}[{\bf Proof of Theorem~\ref{q-bino:gam}}] For any permutation $\sigma\in\PRW_{n+1}$ and $x\in[n+1]$, it is not hard to see that the permutation $\varphi_x(\sigma)$ still has the property that the entry $1$ is the first ascent. Thus, the set $\PRW_{n+1}$ is invariant under the MFS-action. The MFS-action divides the set $\PRW_{n+1}$ into disjoint orbits. Moreover, if $x$ is a double descent (resp.~peak or valley) of $\sigma$, then $x$ is a double ascent (resp.~peak or valley) of the permutation $\varphi'_x(\sigma)$. In the orbit containing $\sigma$, we can choose the unique permutation with least descents (also coincident with the one without double descents), denoted $\bar{\sigma}$, as a representative element. Then, we have $\da(\bar{\sigma})=n-\peak(\bar{\sigma})-\valley(\bar{\sigma})$ and 
   $\des(\bar{\sigma})=\peak(\bar{\sigma})=\valley(\bar{\sigma})-1$.
   
By~\cite[Lemma~7]{lz}, the statistic ``$\ai$'' is constant inside each orbit. Thus, by Theorem \ref{int:bin} and the above discussion, one may deduce that
\begin{align*}
\tilde{A}_n(t,q)=\sum_{\sigma\in\PRW_{n+1}}t^{\des(\sigma)}q^{\ai(\sigma)}&=\sum_{k=0}^{\lfloor n/2\rfloor}\left(\sum_{\bar{\sigma}\in\PRW_{n+1}\cap\Gamma_{n+1,k}}q^{\ai(\bar{\sigma})}\right)t^{k}(1+t)^{n-2k}\\
&=\sum_{k=0}^{\lfloor n/2\rfloor}\left(\sum_{\bar{\sigma}\in\PRW_{n+1}\cap\Gamma_{n+1,k}}q^{\inv(\bar{\sigma})}\right)t^{k}(1+t)^{n-2k}\\
&=\sum_{k=0}^{\lfloor n/2\rfloor}\left(\sum_{\pi\in\tilde{\Gamma}_{n,k}}q^{\inv(\pi)}\right) t^{k}(1+t)^{n-2k},
\end{align*}
where the second last equality is a consequence of~\cite[Lemma~8]{lz}, while the last equality follows from the simple one-to-one correspondence 
$$
\bar{\sigma}_1\bar{\sigma}_2\cdots\bar{\sigma}_n\mapsto (\bar{\sigma}_2-1)\cdots(\bar{\sigma}_n-1)
$$
between $\PRW_{n+1}\cap\Gamma_{n+1,k}$ and $\tilde{\Gamma}_{n,k}$.
Note that the first letter of each $\bar{\sigma}\in\PRW_{n+1}\cap\Gamma_{n+1,k}$ must be $1$. It is easy to check that the above correspondence is a bijection preserving both the number of descents and the number of inversions. This establishes~\eqref{bino:gam}.
\end{proof}

\begin{remark}\label{gam:prw}
In each orbit of the MFS-action on $\PRW_{n+1}$, there is a unique permutation with least ascents, which is exactly the one with no double ascents. Thus, the interpretation of $\tilde{\gamma}_{n,k}$ in~\eqref{gam:bino-eul} due to Postnikov, Reiner and Williams is clear. 
\end{remark}

Define the $\gamma$-polynomial of $A_n(t,q)$ and $\tilde{A}_{n}(t,q)$ by 
$$
\Gamma_{n}(y,q):=\sum_{k=0}^{\lfloor \frac{n-1}{2}\rfloor}\gamma_{n,k}(q)y^k\quad\text{and}\quad
\tilde{\Gamma}_n(y,q):=\sum_{k=0}^{\lfloor\frac{n}{2}\rfloor}\tilde{\gamma}_{n,k}(q)y^k,
$$
respectively. 
The following recurrence relation for  $\tilde{\Gamma}_n(y,q)$ follows directly from Theorem~\ref{rec:q-bino} and the relationships
$$\tilde{A_n}(t,q)=(1+t)^n\tilde{\Gamma}_n(t/(1+t)^2,q)\quad\text{and}\quad A_n(t,q)=(1+t)^{n-1}\Gamma_n(t/(1+t)^2,q).$$

\begin{corollary}\label{cor:rec:Gamma}
We have the following recursion for  $\tilde{\Gamma}_n(y,q)$:
\begin{equation}\label{rec:gamb}
\tilde{\Gamma}_{n+1}(y,q)=\tilde{\Gamma}_n(y,q)+y\sum_{k=1}^n{n\brack k}_qq^k\Gamma_k(y,q)\tilde{\Gamma}_{n-k}(y,q).
\end{equation}
\end{corollary}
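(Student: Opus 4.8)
The plan is to feed the two stated change-of-variable relations directly into the quadratic recursion of Theorem~\ref{rec:q-bino} and to bookkeep the powers of $(1+t)$. Set $y=t/(1+t)^2$. First I would replace each factor in
$$
\tilde{A}_{n+1}(t,q)=(1+t)\tilde{A}_{n}(t,q)+t\sum_{k=1}^n{n\brack k}_qq^kA_k(t,q)\tilde{A}_{n-k}(t,q)
$$
by its $\tilde{\Gamma}$- or $\Gamma$-counterpart: on the left $\tilde{A}_{n+1}(t,q)=(1+t)^{n+1}\tilde{\Gamma}_{n+1}(y,q)$, the first term on the right becomes $(1+t)^{n+1}\tilde{\Gamma}_n(y,q)$, and in the $k$th summand the product $A_k(t,q)\tilde{A}_{n-k}(t,q)$ becomes $(1+t)^{k-1}(1+t)^{n-k}\Gamma_k(y,q)\tilde{\Gamma}_{n-k}(y,q)$, which equals $(1+t)^{n-1}\Gamma_k(y,q)\tilde{\Gamma}_{n-k}(y,q)$ independently of $k$.

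The key observation is that the exponents $k-1$ and $n-k$ always sum to $n-1$, so the power of $(1+t)$ in each summand is the same and factors out of the sum. Hence after substitution the identity reads
$$
(1+t)^{n+1}\tilde{\Gamma}_{n+1}(y,q)=(1+t)^{n+1}\tilde{\Gamma}_n(y,q)+t(1+t)^{n-1}\sum_{k=1}^n{n\brack k}_qq^k\Gamma_k(y,q)\tilde{\Gamma}_{n-k}(y,q).
$$
Dividing through by $(1+t)^{n+1}$ collapses the prefactor of the sum to $t(1+t)^{n-1}/(1+t)^{n+1}=t/(1+t)^2=y$, producing exactly the recursion~\eqref{rec:gamb}.

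The two substitution relations themselves are the only ingredients needing justification, and they follow at once from the $\gamma$-expansions already recorded: by Theorem~\ref{q-bino:gam} one has $\tilde{A}_n(t,q)=\sum_k\tilde{\gamma}_{n,k}(q)t^k(1+t)^{n-2k}=(1+t)^n\sum_k\tilde{\gamma}_{n,k}(q)y^k=(1+t)^n\tilde{\Gamma}_n(y,q)$, and analogously $A_n(t,q)=(1+t)^{n-1}\Gamma_n(y,q)$ from the $q$-$\gamma$-positivity of $A_n(t,q)$ recalled above. Since the whole argument is a purely formal rational-function manipulation in the single variable $y$, I expect no genuine obstacle; the only point demanding care is the uniform power-counting in the convolution sum, which the identity $(k-1)+(n-k)=n-1$ settles immediately.
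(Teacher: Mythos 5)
Your argument is correct and is exactly the paper's intended derivation: the authors state that the corollary ``follows directly from Theorem~\ref{rec:q-bino} and the relationships $\tilde{A}_n(t,q)=(1+t)^n\tilde{\Gamma}_n(t/(1+t)^2,q)$ and $A_n(t,q)=(1+t)^{n-1}\Gamma_n(t/(1+t)^2,q)$,'' and your uniform power count $(k-1)+(n-k)=n-1$ together with the division by $(1+t)^{n+1}$ is precisely the bookkeeping they leave to the reader.
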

\begin{remark}
One may also prove Theorem~\ref{q-bino:gam} by showing that the polynomials 
$$
\sum_{k=0}^{\lfloor\frac{n}{2}\rfloor}y^k\biggl(\sum_{\pi\in\tilde{\Gamma}_{n,k}}q^{\inv(\pi)}\biggr)
$$
satisfy the same recurrence relation as $\tilde{\Gamma}_{n}(y,q)$ in~\eqref{rec:gamb}.
\end{remark}

\section{Continued fractions and the unimodality of $\tilde{A}_n(t,-1)$}
\label{sec:unimodal}

In this section, we present a proof of the unimodality of $\tilde A_n(t,-1)$
and give a new $(p,q)$-extension of the $\gamma$-positivity of $\tilde{A}_n(t)$, via the machine of continued fraction.

\subsection{The unimodality of $\tilde A_n(t,-1)$} 
\label{sec:4.1}
Since the product of two palindromic and unimodal polynomials is again palindromic and unimodal (cf.~\cite{SWZ15}), recursion~\eqref{bino:odd} implies that Theorem~\ref{thm:bino} needs to be shown for even integers~$n$ only, that is, to show that the palindromic polynomial
\begin{equation}\label{def:star-A}
A^*_m(t):=1+t\sum_{k=1}^m{m\choose k}(1+t)^kA_k(t)
\end{equation} is unimodal for any integer $m\geq1$. 

The polynomials  $A_n^*(t)$ can be named the {\em binomial-Eulerian polynomials of type B}, since $(1+t)^nA_n(t)$ are the flag descent  polynomials~\cite{AB} over the Coxeter group of type B, namely, 
$$
(1+t)^nA_n(t)=\sum_{\sigma\in \mathfrak{B}_n}t^{\fdes(\sigma)},
$$
where $\mathfrak{B}_n$ is the set of  signed permutations of $[n]$ and $\fdes(\sigma)$  is the number of flag descents of $\sigma$. In order to prove the unimodality of $A^*_n(t)$, we need some preparation.

\begin{Def}
For any  permutation $\sigma \in \S_n$, the  numbers  of {\em cycle peaks, cycle valley, cycle double rises, cycle double descents, fixed points} of $\sigma$ are defined, respectively, by  
\begin{align*}
\cpeak(\sigma)&:=|\{i\in[n]\colon \sigma ^{-1}(i)<i>\sigma (i)\}|,\\
\cvalley(\sigma)&:=|\{i\in[n]\colon \sigma ^{-1}(i)>i<\sigma (i)\}|,\\
\cdrise(\sigma)&:=|\{i\in[n]\colon \sigma ^{-1}(i)<i<\sigma (i)\}|,\\
\cdfall(\sigma)&:=|\{i\in[n]\colon \sigma ^{-1}(i)>i>\sigma (i)\}|,\\
\fix(\sigma)&:=|\{i\in[n]\colon\sigma(i)=i\}|.
\end{align*}  
\end{Def}

For instance, if the cycle form of $\sigma\in\S_7$ is $(1462)(3)(57)$,  then 
$\cpeak(\sigma)=2, \cvalley(\sigma)=2, \cdrise(\sigma)=1,\cdfall(\sigma)=1$ and $\fix(\sigma)=1$.
Define 
$$
Q_n(a,b,c,d,\alpha)=\sum_{\sigma\in \S_n} a^{\cvalley(\sigma)}b^{\cpeak(\sigma)}c^{\cdfall(\sigma)}
d^{\cdrise(\sigma)}\alpha^{\fix(\sigma)}.
$$
We recall the following result from Zeng\cite{Zeng93}.
\begin{lemma}[Zeng] We have 
\begin{align}\label{fix-carlitz-scoville}
\sum_{n\geq 0}Q_n(a,b,c,d,\alpha)\frac{x^n}{n!}=e^{\alpha x}\frac{\alpha_1-\alpha_2}
{\alpha_1 e^{\alpha_2x}-\alpha_2 e^{\alpha_1 x}},
\end{align}
where $\alpha_1\alpha_2=ab$ and $\alpha_1+\alpha_2=c+d$.
Moreover,
\begin{align}\label{fix-carlitz-scoville-cf}
\sum_{n=0}^\infty Q_n(a,b,c,d,\alpha)x^n=
\cfrac{1}{1-\gamma_0 x-\cfrac{\beta_1x^2}{1-\gamma_1x-\cfrac{\beta_2x^2}{1-\gamma_2x-\cdots}}}
\end{align}
where 
$\gamma_n=n(c+d)+\alpha$ and $\beta_n=n^2ab$.
\end{lemma}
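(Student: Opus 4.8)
The plan is to establish the two formulas by separate but classical routes, basing the continued fraction \eqref{fix-carlitz-scoville-cf} on the theory of weighted Motzkin paths and then recovering the exponential generating function \eqref{fix-carlitz-scoville} from the cycle decomposition. First I would encode each permutation $\sigma\in\S_n$ as a labelled Motzkin path of length $n$ (a Laguerre history) via the Foata--Zeilberger bijection: reading the entries $1,2,\dots,n$ in increasing order and letting the height record the number of currently open cycle arcs, an element that is a cycle valley becomes an up-step, a cycle peak a down-step, a cycle double rise and a cycle double fall two distinguishable level steps, and a fixed point a level step of a third kind. Under this encoding the four statistics $\cvalley,\cpeak,\cdrise,\cdfall$ contribute the weights $a,b,d,c$ to the up-, down- and two generic level steps respectively, while a fixed point contributes $\alpha$ to its level step, the entry being a self-loop that opens no arc.

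Next I would sum over the admissible arc-matching labels and apply Flajolet's fundamental theorem, which identifies the ordinary generating function of such weighted Motzkin paths with the J-fraction in \eqref{fix-carlitz-scoville-cf}: the coefficient $\gamma_h$ is the total weight of a level step at height $h$, and $\beta_h$ is the product of the weights of an up-step into height $h$ and a down-step out of height $h$. The arc-matching choices contribute a factor equal to the current height $h$ to each cycle valley, peak, double rise and double fall, whereas a fixed point carries no such factor; summing these gives $\gamma_h=hc+hd+\alpha=h(c+d)+\alpha$ and $\beta_h=(ha)(hb)=h^2ab$, which is exactly \eqref{fix-carlitz-scoville-cf}. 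As a sanity check, specializing $a=b=c=d=\alpha=1$ yields $\gamma_h=2h+1$ and $\beta_h=h^2$, Euler's classical continued fraction for $\sum_n n!\,x^n=\sum_n Q_n(1,1,1,1,1)x^n$.

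For \eqref{fix-carlitz-scoville} I would use the exponential formula. Since a permutation is a set of cycles, $\sum_{n\ge0}Q_n\frac{x^n}{n!}=\exp\bigl(\alpha x+C(x)\bigr)$, where $\alpha x$ accounts for fixed points and $C(x)=\sum_{k\ge2}c_k\frac{x^k}{k!}$ is the weighted exponential generating function of cycles of length at least two. Within a single cycle one has $\cvalley=\cpeak$, so each such cycle is weighted by $(ab)^{\cvalley}c^{\cdfall}d^{\cdrise}$. Introducing the roots $\alpha_1,\alpha_2$ of $z^2-(c+d)z+ab=0$, so that $\alpha_1\alpha_2=ab$ and $\alpha_1+\alpha_2=c+d$, the remaining task is the cyclic Carlitz--Scoville computation showing $C(x)=\log\frac{\alpha_1-\alpha_2}{\alpha_1 e^{\alpha_2 x}-\alpha_2 e^{\alpha_1 x}}$; exponentiating then gives \eqref{fix-carlitz-scoville}. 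Alternatively, \eqref{fix-carlitz-scoville} can be read off directly from \eqref{fix-carlitz-scoville-cf} by recognizing $Q_n$ as the moment sequence of a Meixner-type family of orthogonal polynomials with the three-term recurrence coefficients $\gamma_n,\beta_n$, whose moment generating function is classically of this closed form.

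The hard part will be the exact bookkeeping of multiplicities in the Foata--Zeilberger step: one must check not only that the five cycle statistics map to the announced step types, but that the number of admissible arc connections is precisely the current height $h$ at each relevant step, so that the factors combine into $\beta_h=h^2ab$ rather than a single power of $h$. For the generating function the delicate point is the single-cycle evaluation of $C(x)$. The cleanest way around both difficulties is to transport the cycle statistics to linear ones through Foata's fundamental transformation, reducing the whole lemma to the classical Carlitz--Scoville identity together with the separate treatment of fixed points via the factor $e^{\alpha x}$.
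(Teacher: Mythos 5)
The paper does not prove this lemma at all --- it is recalled from \cite{Zeng93} without argument --- so there is no in-paper proof to measure your attempt against; I can only assess it on its own terms. Your two-track plan is the standard one and is correct in outline. For \eqref{fix-carlitz-scoville-cf}, the Foata--Zeilberger encoding of $\sigma$ as a Laguerre history plus Flajolet's theorem does yield $\gamma_h=h(c+d)+\alpha$ and $\beta_h=h^2ab$. The bookkeeping you flag as delicate works out: a cycle peak $i$ read at height $h$ forces two independent choices (which open outgoing arc satisfies $\sigma(j)=i$, and which open incoming arc equals $\sigma(i)$), giving multiplicity $h^2$ on the down step, while a double rise or double fall forces exactly one such choice (multiplicity $h$) and a cycle valley or fixed point forces none; how the two factors of $h$ are distributed between the up- and down-steps is a matter of convention, and only their product enters $\beta_h$, so your $(ha)(hb)$ is acceptable. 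For \eqref{fix-carlitz-scoville}, the factorization $\sum_nQ_n(a,b,c,d,\alpha)x^n/n!=e^{\alpha x}\exp(C(x))$ is legitimate because the four cycle statistics are additive over cycles and $\cvalley=\cpeak$ holds on each cycle, so everything reduces to the single identity $\exp(C(x))=(\alpha_1-\alpha_2)/(\alpha_1e^{\alpha_2x}-\alpha_2e^{\alpha_1x})$, which you name but do not prove.

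That deferred identity, together with the height-multiplicity check, is essentially the entire mathematical content of the lemma, so as written your text is a correct and viable plan rather than a complete proof. Two further cautions. First, your fallback of transporting everything through Foata's fundamental transformation needs care: the transformation carries $(\cvalley,\cpeak,\cdrise,\cdfall)$ to linear valley/peak/double-ascent/double-descent statistics (this is exactly how the paper's identity \eqref{line-stat} arises), but it carries $\fix$ to the number of foremaxima $\fmax$, not to a cleanly separable factor; the $e^{\alpha x}$ splitting is therefore best obtained from the exponential formula on cycles, as in your primary route, after which the derangement generating function $\exp(C(x))$ can be evaluated either by the classical Carlitz--Scoville computation or by checking that its reciprocal $g(x)=(\alpha_1e^{\alpha_2x}-\alpha_2e^{\alpha_1x})/(\alpha_1-\alpha_2)$ satisfies $g''=(c+d)g'-abg$ with $g(0)=1$, $g'(0)=0$, and matching the combinatorial recursion. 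Second, your alternative of deducing \eqref{fix-carlitz-scoville} from \eqref{fix-carlitz-scoville-cf} by identifying $Q_n$ as Meixner moments is sound and is close in spirit to the cited source, which connects the closed-form exponential generating function to the $J$-fraction rather than proving the two statements by unrelated methods; whichever direction you choose, one of the two closed forms must actually be computed rather than merely recognized.
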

Since $\exc(\sigma)=\cvalley(\sigma)+\cdrise(\sigma)$, 
we have 
$A_n(t)=\sum_{\sigma\in\S_n}t^{\exc(\sigma)}=Q_n(t,1,1,t,1)$ and the well-known formula
\begin{equation}\label{gf:eulerian}
\sum_{n\geq 0} A_n(t)\frac{x^n}{n!}=\frac{t-1}{t-e^{x(t-1)}}
\end{equation}
is a special case of~\eqref{fix-carlitz-scoville}.

Next we compute the exponential generating function of $A^*_n(t)$.

\begin{lemma} We have  
\begin{align}\label{gf2}
\sum_{n\geq 0} A_n^*(t)\frac{x^n}{n!}= \frac{(t-1)e^{t^2x}}{t-e^{(t^2-1)x}}.
\end{align}
\end{lemma}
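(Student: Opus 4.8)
The plan is to compute the exponential generating function of $A_n^*(t)$ directly from its definition \eqref{def:star-A}, exploiting the product structure and the known generating function \eqref{gf:eulerian} for the Eulerian polynomials. The key observation is that the sum $\sum_{k=1}^m\binom{m}{k}(1+t)^kA_k(t)$ is almost a binomial convolution, so its generating function should factor as a product of two exponential generating functions.

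First I would rewrite $A_m^*(t)-1 = t\sum_{k=1}^m\binom{m}{k}(1+t)^kA_k(t)$ and recognize the right-hand side as a binomial transform. Setting $A_0(t)=0$ (the convention used in Corollary~\ref{sign-qbino}), I can extend the inner sum to start at $k=0$ without changing its value. The binomial convolution identity
\[
\sum_{m\geq 0}\left(\sum_{k=0}^m\binom{m}{k}a_k b_{m-k}\right)\frac{x^m}{m!}
=\left(\sum_{k\geq 0}a_k\frac{x^k}{k!}\right)\left(\sum_{j\geq 0}b_j\frac{x^j}{j!}\right)
\]
applies with $a_k=(1+t)^kA_k(t)$ and $b_j=1$ for all $j$. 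Thus I would compute $\sum_{k\geq 0}(1+t)^kA_k(t)\frac{x^k}{k!}$ by substituting $x\mapsto(1+t)x$ into \eqref{gf:eulerian}, yielding $\frac{t-1}{t-e^{(1+t)x(t-1)}}=\frac{t-1}{t-e^{(t^2-1)x}}$, and multiply by $\sum_{j\geq 0}\frac{x^j}{j!}=e^x$.

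Assembling the pieces, the generating function of $A_m^*(t)$ becomes
\[
\sum_{m\geq 0}A_m^*(t)\frac{x^m}{m!}
= e^x + t\,e^x\cdot\frac{t-1}{t-e^{(t^2-1)x}},
\]
where the leading $e^x$ accounts for the constant $1$ in $A_m^*(t)$ (noting $A_0^*(t)=1$). The final step is the algebraic simplification: combining over the common denominator $t-e^{(t^2-1)x}$ gives
\[
\frac{e^x\bigl(t-e^{(t^2-1)x}\bigr)+t(t-1)e^x}{t-e^{(t^2-1)x}}
=\frac{e^x\bigl(t^2-e^{(t^2-1)x}\bigr)}{t-e^{(t^2-1)x}}.
\]
To reach the stated form $\frac{(t-1)e^{t^2x}}{t-e^{(t^2-1)x}}$ I anticipate the main obstacle is recognizing that these two expressions are in fact equal: the numerators $e^x(t^2-e^{(t^2-1)x})$ and $(t-1)e^{t^2x}$ do not match termwise, so I would expect that the correct constant term is not simply $e^x$ but must be reconciled with the $m=0$ value. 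I would therefore double-check the treatment of the constant term, re-deriving the contribution carefully (the $+1$ in $A_m^*$ contributes $e^x$, but the convolution already produces a term that may overlap), and confirm the closed form by verifying the first few coefficients against $A_1^*(t),A_2^*(t),\dots$ computed directly from \eqref{def:star-A}. Once the bookkeeping of the constant and the $k=0$ term is settled, the simplification to \eqref{gf2} should follow from elementary manipulation of exponentials.
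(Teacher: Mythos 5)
Your overall strategy --- binomial convolution against $e^x$ plus the substitution $x\mapsto(1+t)x$ in \eqref{gf:eulerian} --- is exactly the paper's, but as written the computation does not close, and the discrepancy you notice at the end is a genuine bookkeeping error rather than a failure of ``termwise matching.'' The source of the excess is this: the generating function \eqref{gf:eulerian} has constant term $1$ at $x=0$, i.e.\ it encodes the convention $A_0(t)=1$; you declare $A_0(t)=0$ in order to extend the inner sum to $k=0$, but then substitute into \eqref{gf:eulerian} anyway. With $A_0(t)=0$ one has
\[
\sum_{k\geq0}(1+t)^kA_k(t)\frac{x^k}{k!}=\frac{t-1}{t-e^{(t^2-1)x}}-1,
\]
not $\frac{t-1}{t-e^{(t^2-1)x}}$, so your assembled expression $e^x+te^x\frac{t-1}{t-e^{(t^2-1)x}}$ exceeds the truth by $te^x$ (it already fails at $x=0$, where it gives $1+t$ instead of $A_0^*(t)=1$). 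Correcting this gives
\[
\sum_{m\geq0}A_m^*(t)\frac{x^m}{m!}
=e^x+te^x\Bigl(\frac{t-1}{t-e^{(t^2-1)x}}-1\Bigr)
=(1-t)e^x+te^x\,\frac{t-1}{t-e^{(t^2-1)x}},
\]
and now the algebra does finish:
\[
(t-1)e^x\Bigl(\frac{t}{t-e^{(t^2-1)x}}-1\Bigr)
=(t-1)e^x\,\frac{e^{(t^2-1)x}}{t-e^{(t^2-1)x}}
=\frac{(t-1)e^{t^2x}}{t-e^{(t^2-1)x}}.
\]
The paper arrives at the same intermediate expression by the complementary bookkeeping: it keeps $A_0(t)=1$ inside the convolution and splits the constant as $1=(1-t)+t$, so the leading term is $(1-t)e^x$ from the outset. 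Since you explicitly leave the reconciliation unresolved (``once the bookkeeping \ldots is settled''), the proof as submitted is incomplete; the missing step is only the one-line correction above, and everything else you wrote is sound.
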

\begin{proof} 
It follows from~\eqref{def:star-A} and~\eqref{gf:eulerian} that
\begin{align*}
\sum_{n\geq 0}  A^*_n(t)\frac{x^n}{n!}&=(1-t)e^x
+t\sum_{n\geq 0}\sum_{k=0}^n {n\choose k}(1+t)^kA_k(t)\frac{x^n}{n!}\\
&=(1-t)e^x+te^x \sum_{n\geq0} A_n(t)\frac{(1+t)^nx^n}{n!}\\
&=\frac{(t-1)e^{t^2x}}{t-e^{(t^2-1)x}},
\end{align*}
as desired.
\end{proof}

We also need the following result~\cite[p.~306]{GJ04}.
\begin{lemma}[Jacobi--Rogers formula]\label{lem:JR}  Let $J_n$ be the sequence of coefficients in
the expansion 
$$
\sum_{n\geq 0} J_n x^n=\cfrac{1}{1-b_0t-\cfrac{\lambda_1 x^2}{
1-b_1x-\cfrac{\lambda_2 x^2}{
1-b_2x-\cdots}}}.
$$ 
Then for $n\geq 1$, we have
\begin{align*}
J_n&=b_0^{n}+\sum_{h\geq 0}\sum_{n_0, \ldots, n_{h}\geq 1\atop m_0, \ldots, m_{h+1}\geq 0} (b_0^{m_0}\cdots b_{h+1}^{m_{h+1}})(\lambda_1^{n_{0}}\cdots 
\lambda_{h+1}^{n_{h}}) \cdot \rho({\mathbf n}, \mathbf{m}),
\end{align*}
where 
$2{(n_0+\cdots +n_{h})}+{(m_0+\cdots +m_{h+1})}=n$ and 
$$
\rho({\mathbf n}, \mathbf{m}):=\prod_{j=0}^{h+1}{n_{j}+n_{j-1}-1\choose n_{j-1}-1}
{m_j+n_j+n_{j-1}-1\choose m_j}
$$
with the convention $n_{-1}=1$ and $n_{h+1}=0$.
\end{lemma}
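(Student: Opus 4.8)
The plan is to prove the formula through the combinatorial theory of continued fractions, by interpreting the coefficients $J_n$ as a weighted enumeration of lattice paths and then sorting the paths according to their height profile. By Flajolet's fundamental theorem on $J$-fractions, the expansion
\[
\sum_{n\ge0}J_nx^n=\cfrac{1}{1-b_0x-\cfrac{\lambda_1x^2}{1-b_1x-\cfrac{\lambda_2x^2}{\cdots}}}
\]
yields that $J_n$ equals the sum, over all Motzkin paths of length $n$ from $(0,0)$ to $(n,0)$ staying weakly above the horizontal axis, of the products of the step weights, where a level step at height $k$ carries weight $b_k$ and each up-step crossing from height $k$ to $k+1$, matched with its returning down-step, contributes $\lambda_{k+1}$. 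First I would isolate the unique path that never leaves height $0$: it consists of $n$ level steps and contributes exactly $b_0^{n}$, which explains the leading term, so that the remaining sum ranges over paths attaining some maximal height $h+1\ge1$.

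Second, I would encode the profile of a path of maximal height $h+1$ by the data $(\mathbf{n},\mathbf{m})$, where $n_j\ge1$ for $0\le j\le h$ is the number of up-steps from height $j$ to height $j+1$ and $m_j\ge0$ for $0\le j\le h+1$ is the number of level steps at height $j$. Since every up-step across level $j$ is matched by a down-step, a count of all steps gives the length relation $2(n_0+\cdots+n_h)+(m_0+\cdots+m_{h+1})=n$, while the total weight of any path with this profile is exactly $(b_0^{m_0}\cdots b_{h+1}^{m_{h+1}})(\lambda_1^{n_0}\cdots\lambda_{h+1}^{n_h})$. It then remains to show that the number of distinct paths realizing a fixed profile equals $\rho(\mathbf{n},\mathbf{m})$.

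Third, I would count the shapes by peeling the path level by level, which separates the two binomial factors. Deleting all level steps leaves a skeleton of up/down steps; reading it from the ground upward, the $n_j$ maximal subpaths lying strictly above height $j$ (the ``level-$(j+1)$ excursions'') are distributed, in their fixed left-to-right order, among the $n_{j-1}$ maximal subpaths lying strictly above height $j-1$, and the number of ways to break an ordered list of $n_j$ items into $n_{j-1}$ consecutive, possibly empty, blocks is the stars-and-bars count $\binom{n_j+n_{j-1}-1}{n_{j-1}-1}$, with the conventions $n_{-1}=1$ and $n_{h+1}=0$ handling the ground level and the top. This produces the first binomial factor. To reinstate the level steps, I would observe that the skeleton passes through height $j$ at exactly $n_j+n_{j-1}$ vertices, so inserting $m_j$ indistinguishable level steps into these slots contributes $\binom{m_j+n_j+n_{j-1}-1}{m_j}$; multiplying over $j$ yields the second factor and hence $\rho(\mathbf{n},\mathbf{m})$. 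Summing the weighted profile counts over all admissible $(\mathbf{n},\mathbf{m})$ and all $h\ge0$, together with the isolated $b_0^n$ term, then completes the proof.

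The main obstacle will be the bookkeeping in the third step: verifying that, independently of the finer structure at higher levels, the skeleton really has precisely $n_j+n_{j-1}$ vertices at height $j$, and that the skeleton-merging choices and the level-step insertions are genuinely independent, so that the overall count factorizes as a product over $j$. The boundary conventions $n_{-1}=1$ and $n_{h+1}=0$ must be checked to make the extreme factors collapse to $1$ (a single ground slot, and an empty distribution at the top), and the stand-alone term $b_0^{n}$ must be excluded from the range $n_j\ge1$ to avoid double counting. An alternative, purely algebraic route would expand the continued fraction directly by induction on its depth; I expect this to reproduce the same binomials but through considerably more opaque manipulations, so I would favor the lattice-path argument above.
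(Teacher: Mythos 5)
Your proposal is correct and coincides with the proof the paper itself relies on: the paper offers no argument of its own but cites \cite[p.~306]{GJ04}, where the Jacobi--Rogers formula is established by precisely your route --- Flajolet's interpretation of the $J$-fraction coefficients as weighted Motzkin paths, isolating the all-level-step path for the $b_0^n$ term, recording the profile $(\mathbf{n},\mathbf{m})$ of up-steps and level steps by height, and counting paths with a fixed profile via the composition factor $\binom{n_j+n_{j-1}-1}{n_{j-1}-1}$ and the level-step insertion factor $\binom{m_j+n_j+n_{j-1}-1}{m_j}$. Your key bookkeeping checks out: the skeleton has exactly $n_j+n_{j-1}$ vertices at height $j$ (with $n_{-1}=1$ absorbing the starting vertex and $n_{h+1}=0$ handling the top), the nested composition data determines the skeleton bijectively so the count genuinely factorizes, and you correctly read the statement's $b_0t$ as the typo for $b_0x$.
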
 

Now we are ready to prove Theorem~\ref{thm:bino}. 
\begin{proof}[{\bf Proof of Theorem~\ref{thm:bino}}]
By the discussions at the beginning of Section~\ref{sec:4.1}, we only need to show that $A^*_n(t)$ is unimodal for each $n\geq1$. 

Comparing the generating functions~\eqref{gf2} and~\eqref{fix-carlitz-scoville} we see that
$$
A^*_n(t)=(1+t)^n Q_n\left(t,1,1,t, \frac{t^2+t+1}{1+t}\right).
$$
It follows from~\eqref{fix-carlitz-scoville-cf} that 
\begin{align}\label{frac:star}
\sum_{n\geq 0} A^*_n(t)x^n=\cfrac{1}{1-b_0t-\cfrac{\lambda_1 x^2}{
1-b_1x-\cfrac{\lambda_2 x^2}{
1-b_2x-\cdots}}},
\end{align}
where  
$b_n=n(t+1)^2+1+t+t^2$ and $\lambda_n=n^2t(1+t)^2$.
In view of the Jacobi--Rogers formula (Lemma~\ref{lem:JR}), we have 
\begin{align}\label{sum:sigbi}
A^*_n(t)=b_0^{n}+\sum_{h\geq 0}
\sum_{n_0, \ldots, n_{h}\geq 1\atop m_0, \ldots, m_{h+1}\geq 0} (b_0^{m_0}\cdots b_{h+1}^{m_{h+1}})
(\lambda_1^{n_{0}}\cdots \lambda_{h+1}^{n_{h}})
 \cdot \rho({\mathbf n}, \mathbf{m}),
\end{align}
where 
\begin{equation}\label{center}
2{(n_0+\cdots +n_{h})}+{(m_0+\cdots +m_{h+1})}=n.
\end{equation}
Note that both the polynomials
 $b_n=(n+1)+(2n+1)t+(n+1)t^2$ and
  $\lambda_n=n^2t(1+t)^2$ are palindromic and unimodal. In view of~\eqref{center}, each product in the summation~\eqref{sum:sigbi} of $A^*_n(t)$ is also palindromic and unimodal with center of symmetry $n$. Hence $A^*_n(t)$ is palindromic and unimodal with center of symmetry $n$.
\end{proof}

\subsection{The log-convexity of $\tilde{A}_n(t)$ and $A^*_n(t)$}

There has been recent interest in the log-convexity of combinatorial sequences or polynomials (cf.~\cite{lw,zhu}). Let  $\mathcal{L}$ be the operator which maps a sequence $\{f_n(q)\}_{n\geq0}$ of polynomials  with real coefficients to the polynomial sequence $\{g_n(q)\}_{n\geq0}$ defined by
$$
g_i(q):=f_{i+1}(q)f_{i-1}(q)-f_i(q)^2.
$$
Then the sequence $\{f_n(q)\}_{n\geq0}$ is called {\em $k$-$q$-log-convex} if $\mathcal{L}^k\{f_n(q)\}_{n\geq0}$ is a sequence of polynomials with non-negative coefficients. 

Before we proceed to show the log-convexity of $\tilde{A}_n(t)$ and $A^*_n(t)$, we need the following continued fraction expansion for the ordinary generating function of $\tilde A_n(t)$ .

\begin{lemma} 
We have 
\begin{equation}\label{frac:bino}
\sum_{n\geq0} \tilde A_n(t) x^n=
\cfrac{1}{1-\gamma_0 x-\cfrac{\beta_1x^2}{1-\gamma_1x-\cfrac{\beta_2x^2}{1-\gamma_2x-\cdots}}},
\end{equation}
where $\gamma_n=(n+1)(t+1)$ and  $\beta_n=n^2 t$.
\end{lemma}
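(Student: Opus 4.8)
The plan is to establish the continued fraction expansion in \eqref{frac:bino} by relating the ordinary generating function of $\tilde A_n(t)$ to a known Stieltjes-type continued fraction, most naturally the one already at hand in \eqref{fix-carlitz-scoville-cf}. First I would compute the exponential generating function of $\tilde A_n(t)$ directly by specializing \eqref{bino:euler} at $q=1$. Since $e(z;1)=\sum_{n\geq0}z^n/n!=e^z$, formula \eqref{bino:euler} collapses to
\begin{equation*}
\sum_{n\geq0}\tilde A_n(t)\frac{x^n}{n!}=\frac{(1-t)e^{x}e^{tx}}{e^{tx}-te^{x}}=\frac{(1-t)e^{(1+t)x}}{e^{tx}-te^{x}}.
\end{equation*}
I would then match this against the exponential generating function \eqref{fix-carlitz-scoville} of $Q_n(a,b,c,d,\alpha)$, exactly as was done for $A^*_n(t)$ in the previous subsection. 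The matching amounts to choosing parameters so that $\alpha_1,\alpha_2$ (the roots of $X^2-(c+d)X+ab$) together with the prefactor $e^{\alpha x}$ reproduce the exponents $t$, $1$, and $1+t$ appearing above.

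The key step is the parameter identification. Writing the right-hand side of \eqref{fix-carlitz-scoville} as $e^{\alpha x}(\alpha_1-\alpha_2)/(\alpha_1 e^{\alpha_2 x}-\alpha_2 e^{\alpha_1 x})$, I expect the choice $\alpha_1=t$, $\alpha_2=1$ (so $c+d=t+1$ and $ab=t$), combined with $\alpha=1$, to yield precisely the generating function above, up to the normalization coming from $\alpha_1-\alpha_2=t-1$. Concretely, I anticipate that $\tilde A_n(t)$ equals a suitable evaluation of $Q_n(a,b,c,d,\alpha)$ with $ab=t$, $c+d=t+1$, and $\alpha=1$; by \eqref{fix-carlitz-scoville-cf} the continued fraction then has coefficients $\gamma_n=n(c+d)+\alpha=n(t+1)+1$ and $\beta_n=n^2 ab=n^2 t$. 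The claimed values are $\gamma_n=(n+1)(t+1)$ and $\beta_n=n^2 t$, so the $\beta_n$ match immediately, while $\gamma_n=(n+1)(t+1)=n(t+1)+(t+1)$ requires $\alpha=t+1$ rather than $\alpha=1$; this signals that the correct normalization is $\alpha=t+1$ with a compensating adjustment, reflecting the extra factor of $1+t$ visible in the degree shift between $A_n$ and $\tilde A_n$.

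The main obstacle will be pinning down this normalization precisely, that is, reconciling the prefactor $e^{(1+t)x}$ and the constant $1-t$ in the numerator with the rigid form of \eqref{fix-carlitz-scoville}. The cleanest route is to absorb the shift by noting that the binomial transform $\tilde A_n(t)=1+t\sum_{m=1}^n\binom{n}{m}A_m(t)$ corresponds at the level of exponential generating functions to multiplication by $e^x$, which shifts each $\alpha_i$ and the fixed-point parameter $\alpha$ uniformly by $1$. Thus I would argue that passing from $A_n(t)=Q_n(t,1,1,t,1)$ to $\tilde A_n(t)$ replaces $\alpha=1$ by $\alpha=t+1$ while leaving $ab=t$ and $c+d=t+1$ untouched, giving exactly $\gamma_n=n(t+1)+(t+1)=(n+1)(t+1)$ and $\beta_n=n^2 t$. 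Once the exponential generating function is seen to coincide with \eqref{fix-carlitz-scoville} under these parameters, the continued fraction \eqref{frac:bino} follows at once from \eqref{fix-carlitz-scoville-cf}, completing the proof.
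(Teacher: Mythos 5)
Your proposal is correct and follows essentially the same route as the paper: specialize \eqref{bino:euler} at $q=1$ (more precisely, substitute $z\mapsto(1-q)x$ and let $q\to1$) to get the exponential generating function $\frac{(1-t)e^{(1+t)x}}{e^{tx}-te^{x}}$, match it with \eqref{fix-carlitz-scoville} to identify $\tilde A_n(t)=Q_n(t,1,1,t,t+1)$, and then read off the continued fraction from \eqref{fix-carlitz-scoville-cf}. One small caution: your heuristic that multiplication by $e^x$ ``shifts $\alpha$ by $1$'' would give $\alpha=2$ rather than $\alpha=t+1$, but this detour is unnecessary, since the prefactor $e^{(1+t)x}$ in the computed generating function already forces $\alpha=1+t$ while $\alpha_1=t$, $\alpha_2=1$ (hence $ab=t$, $c+d=t+1$) come from the denominator.
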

\begin{proof}
By~\eqref{bino:euler} we have 
$$
\sum_{n\geq 0} \tilde A_n(t)\frac{x^n}{n!}=\frac{(t-1)e^{tx}}{t-e^{(t-1)x}}.
$$
Comparing with \eqref{fix-carlitz-scoville}, we deduce that  $\tilde A_n(t)=Q_n(t,1,1, t, t+1)$. 
 The continued fraction expansion~\eqref{frac:bino} then follows from \eqref{fix-carlitz-scoville-cf}.
\end{proof}

\begin{theorem}
The polynomial sequences 
$\{\tilde{A}_n(q)\}_{n\geq1}$ and $\{A^*_n(q)\}_{n\geq1}$ are $3$-$q$-log-convex. 
\end{theorem}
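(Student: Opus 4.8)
The plan is to prove the $3$-$q$-log-convexity of both sequences by exploiting their continued fraction expansions and appealing to a general criterion that reads off log-convexity properties directly from the coefficients $\gamma_n$ and $\beta_n$ (equivalently $b_n$ and $\lambda_n$). The key observation is that we have just established in~\eqref{frac:bino} the Jacobi-type continued fraction for $\sum_n \tilde{A}_n(t)x^n$ with $\gamma_n=(n+1)(t+1)$ and $\beta_n=n^2t$, and in~\eqref{frac:star} the analogous expansion for $\sum_n A^*_n(t)x^n$ with $b_n=(n+1)+(2n+1)t+(n+1)t^2$ and $\lambda_n=n^2t(1+t)^2$. Both sequences are therefore moment sequences of the same structural type, so it suffices to verify a single sufficient condition on their continued fraction parameters.

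First I would invoke the known sufficient condition of Zhu (and its extensions, as used in the $q$-log-convexity literature cited in~\cite{zhu}) stating that if a polynomial sequence $\{f_n(q)\}$ has a Jacobi continued fraction with coefficients $\gamma_n$ and $\beta_n$ satisfying suitable polynomial inequalities---namely that the associated Hankel-type quantities built from $\gamma_n,\beta_n$ are coefficientwise nonnegative up to the relevant order---then $\{f_n(q)\}$ is $k$-$q$-log-convex for the corresponding $k$. The natural route is the linear transformation / total positivity machinery: one shows that the tridiagonal production matrix $J$ with diagonal $\gamma_n$, superdiagonal $1$ and subdiagonal $\beta_n$ is such that the Hankel matrix of moments has the required total-positivity structure. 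For $\tilde{A}_n$ the parameters are linear in $n$ (with $\gamma_n$ linear and $\beta_n$ quadratic in $n$), which is precisely the regime covered by the existing criteria for $3$-$q$-log-convexity.

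Second, for $A^*_n(t)$ I would observe that its continued fraction parameters $b_n$ and $\lambda_n$ have exactly the same shape---$b_n$ linear in $n$ with palindromic unimodal coefficients and $\lambda_n=n^2t(1+t)^2$ quadratic in $n$---so the identical criterion applies verbatim once one checks that the requisite coefficientwise inequalities among the $b_n,\lambda_n$ hold. Concretely, I would verify the defining nonnegativity conditions by direct computation with these explicit polynomials, where the factor $(1+t)^2$ in $\lambda_n$ and the palindromic form of $b_n$ make the needed dominance relations transparent.

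The main obstacle will be isolating and correctly invoking the exact hypotheses of the $3$-$q$-log-convexity criterion: $k$-$q$-log-convexity for $k\geq 2$ is considerably more delicate than ordinary $q$-log-convexity ($k=1$), and the general theorems guaranteeing it from continued fraction data impose technical positivity conditions on the $\gamma_n,\beta_n$ (or $b_n,\lambda_n$) that must be checked, not merely quoted. The delicate point is confirming that the specific linear-in-$n$ forms of our parameters satisfy these conditions all the way to the third iterate $\mathcal{L}^3$; once the correct criterion is pinned down, the verification for both $\tilde{A}_n$ and $A^*_n$ reduces to elementary but careful manipulation of the explicit coefficient polynomials.
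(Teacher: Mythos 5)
Your proposal matches the paper's proof: both rest on the continued fraction expansions \eqref{frac:bino} and \eqref{frac:star} with the stated parameters $\gamma_n=(n+1)(t+1)$, $\beta_n=n^2t$ and $b_n$, $\lambda_n=n^2t(1+t)^2$, and both conclude by invoking Zhu's criterion \cite[Theorem~2.2]{zhu} and checking its coefficientwise positivity hypotheses by routine (computer-assisted) calculation. This is essentially the same argument as in the paper.
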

\begin{proof}
By  a criterion of Zhu~\cite[Theorem~2.2]{zhu},  it is routine to check (for instance, by Maple) that the continued fraction expansion~\eqref{frac:star} implies the $3$-$q$-log-convexity of $\{A^*_n(q)\}_{n\geq1}$. The $3$-$q$-log-convexity of the sequence $\{\tilde{A}_n(q)\}_{n\geq1}$ follows in the same fashion from the continued fraction expansion~\eqref{frac:bino} for $\sum_{n\geq0} \tilde A_n(t) x^n$.
\end{proof}


\subsection{A new $(p,q)$-extension of the $\gamma$-positivity of $\tilde{A}_n(t)$ via continued fraction}
Let us introduce the polynomials $\hat{A}_n(t,p,q)$ by 
\begin{align}\label{new:bino}
\sum_{n\geq 0} \hat{A}_n(t,p,q)x^n =\cfrac{1}{1-b_0x-\cfrac{\lambda_1 x^2}{1-b_1x-\cfrac{\lambda_2 x^2}{1-\cdots}}},
\end{align}
where $b_n=(1+t)[n+1]_{p,q}$ and 
$\lambda_n=tq[n]_{p,q}^2$ with the usual notation $[n]_{p,q}=p^{n-1}+p^{n-2}q+\cdots+pq^{n-2}+q^{n-1}$.
In view of~\eqref{frac:bino}, we have 
$$
\hat{A}_n(t,1,1)=\tilde{A}_n(t),
$$ 
so $\hat{A}_n(t,p,q)$ is a $(p,q)$-analog of the binomial-Eulerian polynomials. The first few values of $\hat{A}_n(t,p,q)$ are 
\begin{align*}
\hat{A}_1(t,p,q)&=1+t,\\
\hat{A}_2(t,p,q)&=1+(2+q)t+t^2,\\
\hat{A}_3(t,p,q)&=1+(3+2q+q^2+pq)t+(3+2q+q^2+pq)t^2+t^3.
\end{align*}
%
The rest of this section is devoted to proving  a $(p,q)$-$\gamma$-positivity decomposition of $\hat{A}_n(t,p,q)$, involving crossings, nestings and generalized patterns of permutations.

\begin{Def}
For any  permutation $\sigma \in \S_n$, the numbers of {\em crossings, nestings, drops, 2--31 patterns, 31--2 patterns and foremaxima} are defined, respectively, by 
\begin{align*}
\cros(\sigma)&:=|\{(i,j)\in[n]\times[n]\colon i<j\leq\sigma_i<\sigma_j\text{ or }i>j>\sigma_i>\sigma_j\}|,\\
\nest(\sigma)&:=|\{(i,j)\in[n]\times[n]\colon i<j\leq\sigma_j<\sigma_i\text{ or }i>j>\sigma_j>\sigma_i\}|,\\
\drop(\sigma)&:=|\{2\leq i\leq n\colon \sigma_i<i\}|,\\
\text{(2--31)}\sigma&:=|\{(i,j)\colon 1\leq i<j\leq n-1\text{ and }\sigma_{j+1}<\sigma_i<\sigma_j\}|,\\
\text{(31--2)}\sigma&:=|\{(i,j)\colon 2\leq i<j\leq n\text{ and }\sigma_{i}<\sigma_{j}<\sigma_{i-1}\}|,\\
\fmax(\sigma)&:=|\{i\in[n]\colon  \sigma_j<\sigma_i\text{ for all $1\leq j<i$ and }\sigma_i<\sigma_{i+1}\}|.
\end{align*}
For instance, if $\sigma=42513\in\S_5$, then $\cros(\sigma)=1$, $\nest(\sigma)=1$, $\drop(\sigma)=2$, $\text{(2--31)}\sigma=2$, $\text{(31--2)}\sigma=2$ and $\fmax(\sigma)=0$.
\end{Def}

The $q$-binomial-Eulerian polynomial $\hat{A}_n(t,1,q)$ arose in Williams' enumeration of totally positive Grassmann cells (see~\cite[Lemma~5]{wi}), while the $(p,q)$-analog $\hat{A}_n(t,p,q)$ first appeared in the work of Corteel~\cite[Proposition~7]{cor}, where she showed that 
\begin{equation}\label{int:corteel}
\hat{A}_n(t,p,q)=\sum_{\sigma\in \S_n} 
p^{\nest(\sigma)}q^{\cros(\sigma)+\drop(\sigma)}
(1+t)^{\fix(\sigma)}t^{\exc(\sigma)}.
\end{equation}
Consider the common enumerative polynomial (see~\cite[Theorem~5]{sz})
\begin{align}
 B_n(p,q,t,u,v, w, y)&=
\sum_{\sigma\in \S_n} 
p^{\nest(\sigma)}q^{\cros(\sigma)}t^{\drop(\sigma)}
u^{\cdrise(\sigma)}v^{\cdfall(\sigma)}w^{\cvalley(\sigma)} y^{\fix(\sigma)}\nonumber\\
&=\sum_{\sigma\in \S_n} 
p^{(2-31) \sigma}q^{(31-2) \sigma}t^{\des(\sigma)}
u^{\da^*(\sigma)-\fmax(\sigma)}v^{\dd(\sigma)}w^{\valley^*(\sigma)} y^{\fmax(\sigma)},\label{line-stat}
\end{align}
where $\da^*(\sigma)=\da(\sigma)+\chi(\sigma_1<\sigma_2)$ and $\valley^*(\sigma)=\valley(\sigma)-\chi(\sigma_1<\sigma_2)$. 
Since $\cdrise(\sigma)+\cvalley(\sigma)=\exc(\sigma)$, it follows from~\eqref{int:corteel} that 
\begin{equation*}
\hat{A}_n(t,p,q)=B_n(p,q,q,t,1,t,1+t). 
\end{equation*}
This relationship together with interpretation~\eqref{line-stat} of $B_n(p,q,t,u,v, w, y)$ gives another interpretation for $\hat{A}_n(t,p,q)$:
\begin{equation}
\hat{A}_n(t,p,q)=
\sum_{\sigma\in \S_n} 
p^{(2-31) \sigma}q^{(31-2) \sigma+\des(\sigma)}
(1+t)^{\fmax(\sigma)}t^{\des(\sigma)},
\end{equation}
in view of the symmetry $\hat{A}_n(t,p,q)=t^n\hat{A}_n(t^{-1},p,q)$ proved below.  
\begin{theorem}
We have 
\begin{equation*}
\hat{A}_n(t,p,q)=\sum_{k=0}^{\lfloor\frac{n}{2}\rfloor}\hat{\gamma}_k(p,q)t^k(1+t)^{n-2k},
\end{equation*}
where  
\begin{equation}\label{gam:hat}
\hat{\gamma}_k(p,q)=\sum_{\sigma\in\hat{\Gamma}_{n,k}}p^{\nest(\sigma)}q^{\cros(\sigma)+k}=\sum_{\sigma\in\tilde{\Gamma}_{n,k}}p^{(2-31) \sigma}q^{(31-2) \sigma+k}
\end{equation}
with  $\hat{\Gamma}_{n,k}:=\{\sigma\in\S_n\colon \cdfall(\sigma)=0,\drop(\sigma)=k\}$.
\end{theorem}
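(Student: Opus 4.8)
The plan is to read everything off the defining continued fraction \eqref{new:bino} by means of a single rescaling of $x$, which simultaneously produces the $\gamma$-expansion and converts the continued fraction into one already interpreted by the Shin--Zeng polynomial $B_n$. First I would substitute $x\mapsto x/(1+t)$ in \eqref{new:bino}. Since $b_n/(1+t)=[n+1]_{p,q}$ and $\lambda_n/(1+t)^2=yq[n]_{p,q}^2$ with $y:=t/(1+t)^2$, the series $\sum_{n\ge0}\hat{A}_n(t,p,q)(1+t)^{-n}x^n$ becomes the $J$-fraction whose level weight at height $h$ is $[h+1]_{p,q}$ and whose down-step weight from height $h$ is $yq[h]_{p,q}^2$. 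Writing $\hat{\Gamma}_n(y,p,q)$ for its coefficient of $x^n$, we get $\hat{A}_n(t,p,q)=(1+t)^n\hat{\Gamma}_n(y,p,q)$, hence the $\gamma$-expansion with $\hat{\gamma}_k(p,q)=[y^k]\hat{\Gamma}_n(y,p,q)$. All weights lie in $\N[p,q]$, so the coefficients are nonnegative; and because $y$ is carried only by down steps, a Motzkin path of length $n$ forces $\deg_y\hat{\Gamma}_n\le\lfloor n/2\rfloor$, matching the stated summation range.

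The heart of the argument is to identify $\hat{\Gamma}_n$ combinatorially. I would invoke the $J$-fraction for $B_n(p,q,T,U,V,W,Y)$ established by Shin and Zeng \cite{sz}, whose level and down weights are $Yp^h+(qU+TV)[h]_{p,q}$ and $WT[h]_{p,q}^2$; specializing at $(T,U,V,W,Y)=(q,t,1,t,1+t)$ indeed returns the weights $b_h=(1+t)[h+1]_{p,q}$ and $\lambda_h=tq[h]_{p,q}^2$ of \eqref{new:bino}, consistent with $\hat{A}_n(t,p,q)=B_n(p,q,q,t,1,t,1+t)$. Setting instead $(T,U,V,W,Y)=(q,1,0,y,1)$ reproduces exactly the level weights $[h+1]_{p,q}$ and down weights $yq[h]_{p,q}^2$ of $\hat{\Gamma}_n$; since two $J$-fractions with identical weight data have identical coefficients, $\hat{\Gamma}_n(y,p,q)=B_n(p,q,q,1,0,y,1)$.

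The two formulas for $\hat{\gamma}_k$ now fall out of the two interpretations of $B_n$ in \eqref{line-stat}. In the first (cycle) interpretation the factor $0^{\cdfall(\sigma)}$ forces $\cdfall(\sigma)=0$; on this set $\drop(\sigma)=\cpeak(\sigma)=\cvalley(\sigma)$, so the weight collapses to $p^{\nest(\sigma)}q^{\cros(\sigma)+\drop(\sigma)}y^{\drop(\sigma)}$, and extracting $[y^k]$ gives the sum over $\hat{\Gamma}_{n,k}$. In the second (linear) interpretation the factor $0^{\dd(\sigma)}$ forces $\dd(\sigma)=0$, leaving $p^{\text{(2--31)}\sigma}q^{\text{(31--2)}\sigma+\des(\sigma)}y^{\valley^*(\sigma)}$. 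Here I would prove $\valley^*(\sigma)=\des(\sigma)$ for every $\sigma$ with $\dd(\sigma)=0$: in the boundary-padded word (with $\sigma_0=\sigma_{n+1}=+\infty$) valleys exceed peaks by exactly one, while $\des(\sigma)$ equals $\peak(\sigma)$ or $\peak(\sigma)+1$ according as $\sigma_1<\sigma_2$ or not, which is compensated precisely by $\valley^*(\sigma)=\valley(\sigma)-\chi(\sigma_1<\sigma_2)$. Extracting $[y^k]$ then yields the sum over $\tilde{\Gamma}_{n,k}$.

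The main obstacle is the bookkeeping in the middle step: one must pin down the Shin--Zeng weight data for $B_n$ precisely enough to justify both specializations (the consistency check against \eqref{new:bino} at $(q,t,1,t,1+t)$ is what anchors the $p,q$ conventions), and one must verify the reduction $\valley^*=\des$ on $\{\dd=0\}$. Neither is deep, but both require care. The conceptual payoff is that the single rescaling $x\mapsto x/(1+t)$ delivers the $(p,q)$-$\gamma$-positivity and the two combinatorial formulas at once, so that the interpretation comes essentially for free from the machinery already in place.
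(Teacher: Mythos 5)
Your proposal is correct and follows essentially the same route as the paper: both rescale the continued fraction \eqref{new:bino} (your $x\mapsto x/(1+t)$ is exactly the paper's identification $\hat{A}_n(t,p,q)=(1+t)^nP_n(p,q,y)$ with $P_n(p,q,y)=B_n(p,q,q,1,0,y,1)$ and $y=t/(1+t)^2$), match the resulting weights against the Shin--Zeng $J$-fraction for $B_n$, and then read off the two formulas for $\hat{\gamma}_{n,k}$ from the two interpretations of $B_n$ via the reductions $\drop=\cvalley$ on $\{\cdfall=0\}$ and $\valley^*=\des$ on $\{\dd=0\}$. The only differences are cosmetic: you spell out the verification of $\valley^*=\des$ and the degree bound $\deg_y\le\lfloor n/2\rfloor$, which the paper leaves implicit.
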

\begin{proof}
Shin and the third author proved in~\cite[Eq.~(34)]{sz} the following continued fraction expansion:  
\begin{align}\label{sz:con}
\sum_{n\geq 0} B_n(p,q,t,u,v,w,y)x^n =\cfrac{1}{1-b_0x-\cfrac{\lambda_1 x^2}{1-b_1x-\cfrac{\lambda_2 x^2}{1-\cdots}}}
\end{align}
with
$b_n=yp^n+(qu+tv)[n]_{p,q}$ and 
$\lambda_n=tw[n]_{p,q}^2$. Let 
$$
P_n(p,q,y):=B_n(p,q,q,1,0,y,1).
$$
It follows from~\eqref{sz:con} that 
\begin{align}\label{gam:drop}
\sum_{n\geq 0} P_n(p,q,y)x^n =\cfrac{1}{1-b_0x-\cfrac{\lambda_1 x^2}{1-b_1x-\cfrac{\lambda_2 x^2}{1-\cdots}}}
\end{align}
with $b_n=[n+1]_{p,q}$ and
$\lambda_n=yq[n]_{p,q}^2$. Comparing~\eqref{gam:drop} with~\eqref{new:bino} yields 
$$
\hat{A}_n(t,p,q)=(1+t)^nP_n(p,q,y),
$$
where $y=\frac{t}{(1+t)^2}$. This is equivalent to 
\begin{equation}\label{eq:gam}
B_n(p,q,q,1,0,y,1)=\sum_{k=0}^{\lfloor\frac{n}{2}\rfloor}\hat{\gamma}_{n,k}(p,q)y^k.
\end{equation}
Since $\cvalley(\sigma)=\drop(\sigma)$ (resp.~$\valley^*(\sigma)=\des(\sigma)$) whenever $\cdfall(\sigma)=0$ (resp.~$\dd(\sigma)=0$), the interpretations of $\hat{\gamma}_{n,k}(p,q)$ in~\eqref{gam:hat} then follow from~\eqref{eq:gam} and the definition of $B_n(p,q,t,u,v, w, y)$.
\end{proof}

\begin{remark}
Foata's first fundamental transformation (cf.~\cite[Prop.~1.3.1]{st0}) establishes a one-to-one correspondence between  $\tilde{\Gamma}_{n,k}$ and $\hat{\Gamma}_{n,k}$.
\end{remark}

\section{Closing remarks}
The elementary approach via quadratic  recursion in Section~\ref{sign:qbino} could be applied to prove other known or new sign-balance identities for the Eulerian distributions on restricted permutations, including the descent polynomials of Andr\'e or Simsun permutations and the excedance polynomials of $321$-avoiding permutations. The interested reader is referred to an extended version~\cite{lwz} of this paper for details.

Note that Wachs~\cite{wa}
used a combinatorial involution 
to prove \eqref{foata}. It would be interesting to find 
analogous 
combinatorial proof for Theorem~\ref{sign:qeul} and Corollary~\ref{sign-qbino}. 
A combinatorial polynomial $h(t)=\sum_{k=0}^na_k(q)t^k\in\mathbb{N}[q][t]$ is {\em $q$-log-concave} if 
$a_k(q)^2-a_{k-1}(q)a_{k+1}(q)\in \mathbb{N}[q]$.
We propose the following conjectures.
\begin{conjecture}
The $q$-binomial-Eulerian polynomial $\tilde{A}_n(t,q)$ is $q$-log-concave for $n\geq1$.
\end{conjecture}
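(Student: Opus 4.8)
The plan is to reduce the coefficientwise $q$-log-concavity of $\tilde{A}_n(t,q)=\sum_k a_{n,k}(q)t^k$ to a statement about its $\gamma$-coefficients, and then to supply a $q$-analogue of the classical fact that the $\gamma$-to-$h$ transform preserves log-concavity. By Theorem~\ref{q-bino:gam} and the expansion~\eqref{bino:gam} we have $\tilde{A}_n(t,q)=\sum_{k}\tilde{\gamma}_{n,k}(q)\,t^k(1+t)^{n-2k}$ with $\tilde{\gamma}_{n,k}(q)=\sum_{\pi\in\tilde{\Gamma}_{n,k}}q^{\inv(\pi)}\in\mathbb{N}[q]$. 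First I would try to establish the auxiliary claim that, for each fixed $n$, the sequence $\{\tilde{\gamma}_{n,k}(q)\}_{0\le k\le\lfloor n/2\rfloor}$ is itself $q$-log-concave and has no internal zeros. Second, I would prove a transfer lemma of the shape: if $(\gamma_k(q))_k$ is a $q$-log-concave sequence in $\mathbb{N}[q]$ with no internal zeros, then $\sum_k\gamma_k(q)\,t^k(1+t)^{n-2k}$ is $q$-log-concave in $t$. Granting both, the conjecture follows.

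For the transfer lemma I would mimic the real-variable argument: writing $a_{n,j}(q)=\sum_k\gamma_k(q)\binom{n-2k}{j-k}$ and forming $a_{n,j}(q)^2-a_{n,j-1}(q)a_{n,j+1}(q)$, I would reorganize the resulting double sum by pairing each index pair $(k,\ell)$ against $(k\pm1,\ell\mp1)$. The binomial factors combine into nonnegative integers by a standard non-crossing lattice path (Lindstr\"om--Gessel--Viennot) argument, while every occurrence of $\gamma_k(q)\gamma_\ell(q)$ with $|k-\ell|\ge 2$ is replaced, using the $q$-log-concavity of the $\gamma$'s, by the flattened product whose difference lies in $\mathbb{N}[q]$. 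The delicate point is that all the error terms produced this way, together with the leftover diagonal contributions, must be shown to lie in $\mathbb{N}[q]$ rather than merely being nonnegative at each specialization $q>0$; this is the one place where the $q$-refinement is genuinely more restrictive than the numerical statement, and it is where I would expect to spend the most bookkeeping.

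The real crux, however, is the auxiliary claim that $\{\tilde{\gamma}_{n,k}(q)\}_k$ is $q$-log-concave. Here I would attempt a direct combinatorial injection. Using the interpretation of $\tilde{\gamma}_{n,k}(q)$ over $\tilde{\Gamma}_{n,k}$ supplied by the proof of Theorem~\ref{q-bino:gam} (equivalently over the double-descent-free orbit representatives in $\PRW_{n+1}$ coming from Theorem~\ref{int:bin}), I would seek a weight-compatible injection $\tilde{\Gamma}_{n,k-1}\times\tilde{\Gamma}_{n,k+1}\hookrightarrow\tilde{\Gamma}_{n,k}\times\tilde{\Gamma}_{n,k}$ under which the total inversion number is preserved up to a nonnegative shift, in the spirit of the injective proofs of $q$-log-concavity for $q$-binomial coefficients. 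Running an induction through the $\gamma$-recursion~\eqref{rec:gamb} of Corollary~\ref{cor:rec:Gamma} instead founders immediately on the sum over $k$, since $q$-log-concavity is not additive and the convolution terms $\Gamma_k(y,q)\tilde{\Gamma}_{n-k}(y,q)$ cannot simply be recombined. I therefore expect the injection to be the decisive difficulty, precisely because $\inv$ does not decompose cleanly along the descent structure of $\tilde{\Gamma}_{n,k}$; it is the faithful tracking of this statistic, rather than the mere existence of a set-theoretic injection, that makes the $q$-version hard and that keeps the conjecture open.
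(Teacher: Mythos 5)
This statement is one of the two open problems with which the paper closes; the authors offer no proof of it, and your proposal does not supply one either. What you have written is a reduction of the conjecture to two further unproven statements, and you yourself concede that the decisive one --- the $q$-log-concavity of the sequence $\{\tilde{\gamma}_{n,k}(q)\}_k$ --- is beyond your reach (``it is the faithful tracking of this statistic \dots that keeps the conjecture open''). A plan that terminates in an admission that its key lemma is open is not a proof, so the conjecture remains exactly where the paper leaves it.

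Beyond that, there are two concrete soft spots in the reduction itself. First, for the transfer lemma to work coefficientwise in $q$ you need more than $q$-log-concavity of the $\gamma$-sequence: the pairing of $(k,\ell)$ against $(k\pm1,\ell\mp1)$ forces you to show $\tilde{\gamma}_{n,k}(q)\tilde{\gamma}_{n,\ell}(q)-\tilde{\gamma}_{n,k-1}(q)\tilde{\gamma}_{n,\ell+1}(q)\in\mathbb{N}[q]$ for all $k\le\ell$, which is Sagan's \emph{strong} $q$-log-concavity; ordinary $q$-log-concavity ($\ell=k$) does not imply it, so even your auxiliary claim, if proved as stated, would not feed the transfer lemma. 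Second, the transfer lemma itself is only verified here at the level of an analogy with the real-variable argument; since the nonnegative-integer combinations of binomial coefficients produced by the Lindstr\"om--Gessel--Viennot pairing multiply \emph{polynomials} in $q$, you must check that no cancellation in $\mathbb{Z}[q]$ is hidden in the ``leftover diagonal contributions,'' and you have not done so. Your observation that the recursion~\eqref{rec:gamb} of Corollary~\ref{cor:rec:Gamma} cannot be used inductively (because $q$-log-concavity is not preserved under addition) is correct and worth keeping, but it only explains why the obvious route fails; it does not open another one. If you want a tractable first step, prove the $q=1$ case of your two lemmas (where $\tilde{A}_n(t)$ is in fact known to behave well) and then isolate precisely which coefficientwise inequalities in $q$ are needed; as the paper notes after Conjecture~\ref{conj:log}, even the specialization $q=-1$ (Theorem~\ref{thm:bino}) required the separate continued-fraction machinery of Section~\ref{sec:unimodal}.
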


\begin{conjecture}\label{conj:log}
The signed binomial-Eulerian polynomial $\tilde{A}_n(t,-1)$ is log-concave for $n\geq1$.
\end{conjecture}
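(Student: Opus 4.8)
The plan is to reduce the log-concavity of $\tilde{A}_n(t,-1)$ to a statement about the \emph{location of its complex roots}, rather than to its additive structure. The additive route that sufficed for unimodality cannot work here: a sum of two palindromic, log-concave polynomials sharing a common center of symmetry need not be log-concave, as one already sees from $(1+t+t^2+t^3+t^4)+(1+3t+9t^2+3t^3+t^4)=2+4t+10t^2+4t^3+2t^4$, whose middle fails $4^2\geq 2\cdot 10$. So the Jacobi--Rogers expansion~\eqref{sum:sigbi}, being a sum of palindromic log-concave products, does not transmit log-concavity the way it transmits unimodality. I would instead exploit the classical fact that the product of two polynomials with nonnegative, log-concave coefficients and no internal zeros is again log-concave; it then suffices to factor $\tilde{A}_n(t,-1)$ over $\mathbb{R}$ into log-concave factors.

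Since $\tilde{A}_n(t,-1)$ has positive coefficients, each real root is negative and yields a log-concave linear factor $1+at$ with $a>0$, while each conjugate pair of non-real roots $z,\bar z$ yields a real quadratic factor proportional to $t^2-2\Re(z)\,t+|z|^2$. A direct computation shows that this quadratic has positive coefficients and is log-concave exactly when $\Re(z)<0$ and $4\Re(z)^2\geq|z|^2$, that is, when $|\arg z|\geq 2\pi/3$. Hence the theorem would follow from the root-localization claim that every root $z$ of $\tilde{A}_n(t,-1)$ lies in the closed sector $2\pi/3\leq|\arg z|\leq\pi$ about the negative real axis. This sector is symmetric under $z\mapsto\bar z$ and under $z\mapsto 1/z$, so it is compatible with the palindromicity of $\tilde{A}_n(t,-1)$, and it treats even and odd $n$ uniformly, avoiding the parity reduction of Section~\ref{sec:4.1} (which used preservation of unimodality under sums and does not transfer). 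The data before Theorem~\ref{thm:bino} support the claim: $\tilde{A}_2(t,-1)=1+t+t^2$ has roots $e^{\pm 2\pi i/3}$ exactly on the sector boundary, while the roots of $\tilde{A}_4(t,-1)$ have argument of magnitude about $130^\circ$.

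The hard part is establishing this root-localization, and I would attack it through the analytic and continued-fraction data already assembled. For even $n=2m$ one has $\tilde{A}_{2m}(t,-1)=A^*_m(t)$, whose exponential generating function~\eqref{gf2} is rational in $e^{(t^2-1)x}$; for fixed complex $t$ its poles in $x$ sit at $x_k(t)=(\log t+2\pi i k)/(t^2-1)$, and a dominant-pole/saddle-point analysis should confine the accumulation set of the zeros of $A^*_m(t)$ to curves on which two poles have equal modulus, which one then checks to lie in the sector. Alternatively, starting from the Jacobi continued fraction~\eqref{frac:star} with $b_n=n(t+1)^2+1+t+t^2$ and $\lambda_n=n^2t(1+t)^2$, I would try to propagate sector-stability along the three-term recurrence $D_{n+1}=(1-b_nx)D_n-\lambda_nx^2D_{n-1}$ for the convergent denominators and then transfer it to the moments $A^*_m(t)$. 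The principal obstacle is exactly this transfer: the convolution-type recursions of Corollary~\ref{cor:sigbino} and the moment/continued-fraction correspondence do not manifestly preserve the sector, and the usual zero-localization tools (real-rootedness, interlacing) are unavailable since $\tilde{A}_n(t,-1)$ genuinely has non-real roots. Should the sector claim fail for some large $n$ while log-concavity persists, one would be forced back to a direct coefficient-by-coefficient estimate via~\eqref{sum:sigbi}, retaining the cancellations that a crude triangle-inequality bound destroys.
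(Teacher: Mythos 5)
You should first be clear about the status of the statement: this is Conjecture~\ref{conj:log} of the paper, which the authors explicitly leave open (they remark only that its validity would imply Theorem~\ref{thm:bino}), so there is no proof in the paper to compare against, and the bar your argument must clear is a complete proof. Within that frame, the reduction you set up is correct and genuinely useful. Since $\tilde{A}_n(t,-1)$ has positive coefficients (visible from Corollary~\ref{sign-qbino}), it factors over $\mathbb{R}$ into linear factors from negative real roots and quadratics $t^2-2\Re(z)t+|z|^2$ from conjugate pairs; your computation that such a quadratic has positive, log-concave coefficients precisely when $2\pi/3\leq|\arg z|\leq\pi$ is right, and combined with the classical closure of positive log-concave sequences without internal zeros under products, the sector localization of all zeros would indeed yield the conjecture. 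Your counterexample $(1+t+t^2+t^3+t^4)+(1+3t+9t^2+3t^3+t^4)$ correctly shows that sums of palindromic log-concave polynomials with common center need not be log-concave, which accurately explains why the additive mechanism behind the paper's proof of Theorem~\ref{thm:bino} (both the parity reduction and the Jacobi--Rogers expansion~\eqref{sum:sigbi}) cannot be recycled here.

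The genuine gap is that the entire crux is missing: the sector claim is verified only for $n\leq 4$, is strictly stronger than log-concavity itself (as you concede), and neither proposed attack is executed. The saddle-point analysis of~\eqref{gf2} could at best describe the limiting accumulation curve of the zeros of $A^*_m(t)$ as $m\to\infty$; it gives no control at finite $m$ and cannot exclude finitely many stray zeros outside the sector, each of which would disable your factorization argument without necessarily falsifying the conjecture. Worse, the claim is already tight at $n=2$, where the roots $e^{\pm 2\pi i/3}$ sit exactly on the sector boundary, so any asymptotic or perturbative argument must control zeros approaching the boundary with no margin to spare. The continued-fraction route founders on the transfer problem you yourself name: zero-localization in $x$ for the convergent denominators $D_n$ of~\eqref{frac:star} (for fixed $t$) says nothing direct about the zeros in $t$ of the moment polynomials $A^*_m(t)$, and no mechanism is offered for propagating a sector condition in $t$ through the three-term recurrence; the usual tools (real-rootedness, interlacing) are indeed unavailable since the polynomials have non-real zeros. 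What you have, then, is a correct sufficient condition plus supporting data --- in effect a sharper conjecture --- and the statement remains open after your argument, exactly as it stands in the paper.
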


The validation  of  Conjecture~\ref{conj:log} would imply  Theorem~\ref{thm:bino}.

\section*{Acknowledgments}

We thank the anonymous referee for the 
insightful comments and suggestions.

The first author's research was supported by the National Science Foundation of China grants 11871247 and 11501244, and by the Training Program Foundation for Distinguished Young Research Talents of Fujian Higher Education. The second author was partially supported by the National  Science Foundation of China grant 11671037. Part of this work was done while the first and third authors were visiting Institute for Advanced study in Mathematics of Harbin Institute of Technology (HIT) in the summer of 2018.

 \end{document}